\documentclass{amsart}

\usepackage{amssymb}

\newcommand{\CC}{\mathbb{C}}
\newcommand{\RR}{\mathbb{R}}
\newcommand{\TT}{\mathbb{T}}

\newcommand{\NN}{\mathbb{N}}
\newcommand{\ZZ}{\mathbb{Z}}

\newcommand{\cF}{\mathcal{F}}
\newcommand{\cZ}{\mathcal{Z}}

\newcommand{\Gram}{\textrm{Gram}}
\newcommand{\Toep}{\textrm{Toep}}

\newcommand{\real}{\textrm{Re}}

\newcommand{\Mon}{\textrm{Mon}}
\newcommand{\Trace}{\textrm{Trace}}

\newtheorem{thm}{Theorem}
\newtheorem{lem}[thm]{Lemma}
\newtheorem{prop}[thm]{Proposition}
\newtheorem{cor}[thm]{Corollary}
\newtheorem{defn}[thm]{Definition}
\newtheorem{ex}[thm]{Example}

\newtheorem{prob}{Problem}

\setcounter{tocdepth}{3}
\setcounter{secnumdepth}{3}

\begin{document}

\title{Hermitian Sums of Squares Modulo Hermitian Ideals}
\author{Glen Frost}
\maketitle

\begin{abstract}
In this work we study the problem of writing a Hermitian polynomial as a Hermitian sum of squares modulo a Hermitian ideal.
We investigate a novel idea of Putinar-Scheiderer to obtain necessary matrix positivity conditions for Hermitian polynomials to be Hermitian sums of squares modulo Hermitian ideals.
We show that the conditions are sufficient for a class of examples making a connection to the operator-valued Riesz-Fejer theorem and block Toeplitz forms.
The work fits into the larger themes of Hermitian versions of Hilbert's 17-th problem and characterizations of positivity.
\end{abstract}

\tableofcontents

\section{Introduction}

In this paper we study the problem of writing a Hermitian polynomial as a Hermitian sum of squares modulo the Hermitian ideal $(z^N \bar{z}^N -1)$, where $N$ is a positive integer.
We investigate an idea of Putinar and Scheiderer from \cite{MR2729629} which constructs a counterexample to a question of D'Angelo.
The idea provides necessary conditions on the Hermitian polynomial, and in our situation we show that they are sufficient.
We begin by stating the problem, discussing the context, describing the idea to obtain the matrix positivity conditions, and formulating the main result.

\subsection{Problem Statement}

Let $\CC[z,\bar{z}]$ denote the polynomial algebra in the variables $z=(z_1,\dots,z_n)$ and $\bar{z}=(\bar{z}_1,\dots,\bar{z}_n)$ with coefficients in $\CC$.
We can write each element $f \in \CC[z,\bar{z}]$ using standard multinomial notation:
\begin{equation}
  \label{eq:1}
  f(z,\bar{z}) = \sum_{\alpha,\beta } a_{\alpha \beta} \bar{z}^\alpha z^\beta
\end{equation}
Or in dimension one ($n=1$) using matrix notation:
\begin{equation}
  \label{eq:3}
  f(z,\bar{z}) =
  \begin{bmatrix}
    1 \\ z \\ \vdots \\ z^d
  \end{bmatrix}^\ast
  \begin{bmatrix}
    a_{00} & a_{01} & \cdots & a_{0d} \\
    a_{10} & a_{11} & \cdots & a_{1d} \\
    \vdots & \vdots & \ddots & \vdots \\
    a_{d0} & a_{d1} & \cdots & a_{dd}
  \end{bmatrix}
  \begin{bmatrix}
    1 \\ z \\ \vdots \\ z^d
  \end{bmatrix}
  = \psi_d(z)^\ast A \psi_d(z)
\end{equation}
where
\begin{equation}
  \label{eq:4}
  \psi_d(z) =
  \begin{bmatrix}
    1 & z & \cdots & z^d
  \end{bmatrix}^T
\end{equation}
denotes the \emph{tautological monomial map} and $A=[a_{jk}]$ is an $(d+1)\times (d+1)$ matrix with complex entries.
We often write $\psi(z)$ for $\psi_d(z)$ when $d$ is understood.

Here we use standard matrix notation and operations.  If $A$ is an $n \times m$ matrix, then $A^T$ denotes the transpose, $\overline{A}$ denotes the conjugate, and $A^*$ denotes the conjugate transpose.  We think of elements $v \in \CC^n$ as column vectors and $\langle v , w \rangle = v^\ast w$ denotes the standard inner product on complex Euclidean space, with the convention being conjugate linear in the first component.

A matrix is \emph{Hermitian} if $A=A^\ast$ and positive semidefinite if $v^\ast A v \geq 0$ for all vectors $v$.  We use the term \emph{positive matrix} when $A$ is positive semidefinite and write $A \geq 0$.

We have an involution $f \mapsto f^\ast$ given by conjugation:
\begin{equation}
  \label{eq:2}
  f^\ast(z,\bar{z}) := \overline{f(z,\bar{z})} = \psi(z)^\ast A^\ast \psi(z)
\end{equation}
Say $f$ is \emph{Hermitian} if $f=f^\ast$.
Hermitian polynomials are real-valued on $\CC^n$.
Let $\CC_h[z,\bar{z}]$ denote the collection of Hermitian polynomials.
An ideal $I$ in $\CC_h[z,\bar{z}]$ is called a \emph{Hermitian ideal}.
If $h(z) \in \CC[z]$ is a holomorphic polynomial, then
\begin{equation}
  \label{eq:5}
  |h(z)|^2 = h(z)\overline{h(z)} \in \CC_h[z,\bar{z}]
\end{equation}
is a \emph{Hermitian square}.
Let $\Sigma^2_h \subset \CC_h[z,\bar{z}]$ denote the collection of finite sums of Hermitian squares.
We have the basic test for a Hermitian polynomial $f$ to be a Hermitian sum of squares: $f \in \Sigma^2_h$ if and only if the Hermitian coefficient matrix is positive.

We now state the main problem:
  \begin{prob}
    Suppose $f$ is a Hermitian polynomial and $I$ is a Hermitian ideal.
    Under what conditions on $f$ and $I$ does there exist an identity:
\begin{equation}
      f(z,\bar{z}) = \sum_{j=1}^\ell |h_j(z)|^2 + g(z,\bar{z})
\end{equation}
    where $h_1,\dots,h_\ell$ are holomorphic polynomials and $g \in I$.
  \end{prob}

  We easily obtain the \emph{trivial necessary condition},  point-wise positivity on the zero set of $I$:
\begin{equation}
    f(p,\bar{p}) \geq 0 \qquad \forall p \in \cZ(I)
\end{equation}
  where $\cZ (I)$ denotes the zero-set of $I$:
\begin{equation}
    \cZ (I) = \{ p \in \CC^n \ | \ g(p,\bar{p}) = 0 \ \forall g \in \cZ( I) \}
\end{equation}

This condition is not sufficient in general.  For example, consider the ideal $I=(0)$ and the polynomial $f(z,\bar{z}) = (z+\bar{z})^2$.
Since $z+\bar{z} = 2\real(z)$, we see $f(p,\bar{p}) \geq 0$ for all $p \in \CC$.
Writing $f$ in matrix notation:
\begin{equation}
  \label{eq:7b}
  (z+\bar{z})^2 =     \begin{bmatrix}
      1 \\ z \\ z^2
    \end{bmatrix}^\ast
    \begin{bmatrix}
      0 & 0 & 1 \\
      0 & 2 & 0 \\
      1 & 0 & 0 
    \end{bmatrix}
    \begin{bmatrix}
      1 \\ z \\ z^2
    \end{bmatrix}
\end{equation}
The coefficient matrix is not positive, hence $f \notin \Sigma^2_h$.

\subsection{Context}

In dimension one, for the ideal $I=(z\bar{z}-1)$, the trivial necessary condition is sufficient.  The idea goes back to the classical Riesz-Fejer lemma concerning positive trigonometric polynomials on the circle \cite{MR1580923} \cite{MR1580922}.
The first multivariable result appears in \cite{MR233770} showing that the trivial necessary condition is sufficient for the odd-dimensional sphere in $\CC^n$.
This theorem was rediscovered in \cite{MR1386836} by D'Angelo and collaborators.
At the 2006 AIM Conference ``CR Complexity Theory'' in Palo Alto, D'Angelo asks the following generalization of Quillen's theorem:

\emph{    If $f$ is a Hermitian polynomial on a pseudoconvex hypersurface, then does $f$ agree with a Hermitian sum of squares along the hypersurface?}

  The answer to this paper lies in the paper \cite{MR2729629}.
  Two main ideas of this paper:
  \begin{enumerate}
  \item An obstruction to the question is introduced and a counterexample is constructed.  The obstruction provides necessary matrix positivity conditions for the Hermitian polynomial $f$ beyond pointwise positivity given by the trivial necessary condition.
  \item A characterization of the Hermitian ideals for which every positive Hermitian polynomial is a Hermitian sum of squares is obtained using the Archimedean Positivstellensatz of real algebra.
  \end{enumerate}

  The first point is explored further in the paper \cite{MR2925474}, and the second point is explored further in the paper \cite{MR3426234}.

  Our goal is to continue the idea of the matrix positivity conditions for Hermitian polynomials, and obtain equivalent characterizations for Hermitian sums of squares modulo Hermitian ideals, thereby going beyond the Archimedean positivstellensatz and pointwise-positivity conditions.

\subsection{Matrix Positivity Conditions for Hermitian Polynomials}
  We describe the idea of matrix positivity conditions used in \cite{MR2729629} to construct the counterexample to D'Angelo's question.  Two remarks are needed before we begin.
  
  First, it is notationally convenient to collect the terms of a Hermitian sum of squares into the components of a holomorphic polynomial map.
  A \emph{holomorphic polynomial map} is a map $h: \CC^n \rightarrow \CC^\ell$ whose components are holomorphic polynomials.
  Every Hermitian sum of squares may be written as the squared norm of a holomorphic polynomial map:
  \begin{equation}
    \label{eq:15}
    |h_1(z)|^2 + \dots + |h_\ell(z)|^2 = \langle h(z), h(z) \rangle
  \end{equation}
  where $h(z) = (h_1(z),\dots,h_\ell(z))$ is a holomorphic polynomial map.

  Second, for a polynomial $f(z,\bar{z})\in \CC[z,\bar{z}]$, we may polarize and treat $z$,$\bar{z}$ as independent variables.  If $f$ is Hermitian, then the polarization satisfies the Hermitian symmetric condition:
  \begin{equation}
    \label{eq:6b}
    f(z,\bar{w}) = \overline{f(w,\bar{z})}
  \end{equation}
  
  Now given a Hermitian polynomial $f$ and a Hermitian ideal $I$, we show how to obtain necessary matrix positivity conditions which must be satisfied by $f$.

  Suppose $f$ is a Hermitian sum of squares modulo $I$:
  \begin{equation}
    \label{eq:16}
    f(z,\bar{z}) = \langle h(z), h(z) \rangle + g(z,\bar{z})
  \end{equation}
  where $h$ is a holomorphic polynomial map and $g \in I$.

  Suppose further that $p_1,\dots,p_\ell \in \CC^n$ are points such that
  \begin{equation}
    \label{eq:17}
    g(p_j, \bar{p}_k) = 0 \qquad \forall g \in I \qquad \forall j,k=1,\dots,\ell
  \end{equation}
  Then
  \begin{equation}
    \label{eq:18}
    f(p_j,\bar{p}_k) = \langle h(p_k), h(p_j) \rangle \qquad \forall j,k=1,\dots,\ell
  \end{equation}
  The matrix determined by the right side is a matrix of pairwise inner products (a \emph{Gram matrix}), which is always positive, hence
  \begin{equation}
    \label{eq:19}
    [  f(p_j,\bar{p}_k)]_{j,k=1}^\ell \geq 0
  \end{equation}
  Thus we conclude:
  \begin{lem}[Matrix Positivity Conditions]\label{MATRIX-POSITIVITY-CONDITIONS}
    Let $f$ be a Hermitian polynomial and $I$ a Hermitian ideal.
    If $f \in \Sigma^2_h+I$, then $f$ satisfies \eqref{eq:19} for all collections of points $p_1,\dots,p_\ell$ satisfying \eqref{eq:17}
  \end{lem}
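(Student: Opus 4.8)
The plan is to unwind the definition of membership in $\Sigma^2_h + I$, polarize the resulting identity, substitute the given points, and recognize the matrix that appears as a Gram matrix. First I would invoke the hypothesis $f \in \Sigma^2_h + I$ to write
\begin{equation*}
  f(z,\bar z) = \langle h(z), h(z) \rangle + g(z,\bar z),
\end{equation*}
where $h : \CC^n \to \CC^\ell$ is a holomorphic polynomial map (using \eqref{eq:15} to collapse the finite sum of Hermitian squares into a single squared norm) and $g \in I$; this is exactly \eqref{eq:16}. Next I would polarize: since $\langle h(z), h(z)\rangle = h(z)^\ast h(z)$ and $h$ is holomorphic, replacing the antiholomorphic argument by an independent variable $\bar w$ turns this into $\langle h(w), h(z)\rangle$, and doing the same to $g$ and to $f$ yields the identity $f(z,\bar w) = \langle h(w), h(z)\rangle + g(z,\bar w)$ in the independent variables $z$ and $\bar w$; the point here is only that polarization is unique and respects sums and products, so this follows termwise from the previous line.

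Then I would specialize $z = p_j$, $w = p_k$. Hypothesis \eqref{eq:17} says $g(p_j, \bar p_k) = 0$, so
\begin{equation*}
  f(p_j, \bar p_k) = \langle h(p_k), h(p_j) \rangle, \qquad j,k = 1,\dots,\ell,
\end{equation*}
which is \eqref{eq:18}. Finally I would observe that the matrix $[\langle h(p_k), h(p_j)\rangle]_{j,k=1}^\ell$ is the Gram matrix of the vectors $h(p_1),\dots,h(p_\ell) \in \CC^\ell$: for every $c \in \CC^\ell$,
\begin{equation*}
  \sum_{j,k=1}^\ell \bar c_j\, \langle h(p_k), h(p_j)\rangle\, c_k = \left\| \sum_{j=1}^\ell \bar c_j\, h(p_j) \right\|^2 \geq 0,
\end{equation*}
so it is positive semidefinite. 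Combining the last two displays gives $[f(p_j,\bar p_k)]_{j,k=1}^\ell \geq 0$, which is \eqref{eq:19}; since nothing about $h$ or $g$ was used beyond the decomposition and the hypothesis on the points, this holds for every collection $p_1,\dots,p_\ell$ satisfying \eqref{eq:17}.

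I do not expect a serious obstacle: the argument is a short direct computation. The one place that warrants care is the polarization step — one must make sure that substituting $z \mapsto p_j$ and $\bar z \mapsto \bar p_k$ is meaningful, i.e., that each Hermitian polynomial has a well-defined polarization $f(z,\bar w)$ in two independent sets of variables, that hypothesis \eqref{eq:17} is genuinely a statement about the polarizations of the elements of $I$, and that the Hermitian symmetry \eqref{eq:6b} is consistent with reading the Gram matrix with its indices in the order shown. Once that bookkeeping is in place, recognizing the inner-product matrix as positive semidefinite is entirely standard.
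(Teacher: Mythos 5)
Your proposal is correct and follows essentially the same route the paper takes: decompose $f = \langle h(z),h(z)\rangle + g$, polarize, evaluate at the points $p_j$ to kill $g$ by \eqref{eq:17}, and recognize the resulting matrix $[\langle h(p_k),h(p_j)\rangle]$ as a Gram matrix. The only difference is that you spell out the polarization step and the positivity of the Gram matrix explicitly, whereas the paper treats both as immediate.
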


  Using this idea we obtain necessary conditions for $f \in \Sigma^2_h + I$.
  Our main question of interest is whether these conditions are sufficient?

  With these ideas in mind, given a Hermitian polynomial $f$ and points $p_1,\dots,p_\ell$, we define the \emph{Gram matrix of $f$ on $p_1,\dots,p_\ell$} as the matrix of pairwise evaluations:
  \begin{equation}
    \label{eq:8b}
    \Gram(f)[p_1,\dots,p_\ell] := [ f(p_j,\bar{p_k})]_{j,k=1}^\ell
  \end{equation}
  
  As an example, consider $f=(z+\bar{z})^2$ and $I=(0)$.
  Choose $p_1 = 0$ and $p_2 = 1$.
  Then consider the Gram matrix of $f$ on $p_1,p_2$:
  \begin{equation}
    \label{eq:20}
    \Gram(f)[p_1,p_2] =
    \begin{bmatrix}
      f(p_1,\bar{p}_1) & f(p_1,\bar{p}_2) \\
      f(p_2,\bar{p}_1) & f(p_2,\bar{p}_2)
    \end{bmatrix}
    =
    \begin{bmatrix}
      0 & 1 \\
      1 & 4
    \end{bmatrix}
  \end{equation}
  The matrix has determinant $-1$, hence is not positive.
  Therefore $f \notin \Sigma^2_h$.
  
  \subsection{Main Results}
  
  Our main result is to prove sufficiency of the matrix positivity conditions for the ideal $    I = (z^N \bar{z}^N - 1 )$, where $N$ is a positive integer.
  The conditions are obtained as follows:
  Let $P(z,\bar{z}) = z^N \bar{z}^N -1$ and $\omega = e^{2\pi i / N}$.
  Then:
  \begin{equation}
    \label{eq:22}
    P ( \omega^j \xi , \overline{\omega^k \xi} ) = 0 \qquad \forall \xi \in \TT \qquad \forall j,k=0,\dots,N-1
  \end{equation}
  Therefore by Lemma \ref{MATRIX-POSITIVITY-CONDITIONS}, if $f \in \Sigma^2_h + (z^N \bar{z}^N - 1)$, then
  \begin{equation}
    \label{eq:23}
    [ f(  \omega^j \xi , \overline{\omega^k \xi} ) ]_{j,k=0}^{N-1} \geq 0 \qquad \forall \xi \in \TT
  \end{equation}
  We will show that these conditions are sufficient.
  The case $N=1$ is the classical Riesz-Fejer lemma.

The next example provides evidence to the investigation.

\renewcommand{\labelenumi}{(\roman{enumi})}
\begin{ex} ($N=2$)
We demonstrate an example of a Hermitian polynomial $f$ such that:
  \begin{enumerate}
  \item $f \notin \Sigma^2_h$
  \item $\Gram(f)[e^{i\theta},-e^{i\theta}] \geq 0$ for all $\theta$
  \item $f \in \Sigma^2_h + (z^2\bar{z}^2-1)$
  \end{enumerate}

  Consider:
  \begin{align}
    f(z,\bar{z}) &= 10 + 2z + 2\bar{z} + 10z\bar{z} -2z^2 \bar{z} -2z\bar{z}^2 \\
    &=
      \begin{bmatrix}
        1 \\ z \\ z^2
      \end{bmatrix}^\ast
    \begin{bmatrix}
      10 & 2 & 0 \\
      2 & 10 & -2 \\
      0 & -2 & 0
    \end{bmatrix}
       \begin{bmatrix}
        1 \\ z \\ z^2
      \end{bmatrix}
  \end{align}
  \begin{enumerate}
  \item By Sylvester's criterion, the Hermitian coefficient matrix is not positive, hence $f \notin \Sigma^2_h$.
  \item We compute $\Gram(f)[e^{i\theta},-e^{i\theta}]$:
    \begin{equation}
      \label{eq:3a}
      \Gram(f)[e^{i\theta},-e^{i\theta}] =
      \begin{bmatrix}
        20 & 8i\sin(\theta) \\
        -8i\sin(\theta) & 20 
      \end{bmatrix}      
    \end{equation}
    We have:
    \begin{equation}
      \label{eq:2a}
      \det \Gram (f)[e^{i\theta},-e^{i\theta}] = 400 -64\sin^2(\theta) > 0      
    \end{equation}
    Thus by Sylvester's criterion:
    \begin{equation}
      \label{eq:111}
      \Gram(f)[e^{i\theta},-e^{i\theta}] \geq 0 \qquad \forall \theta \in [0,2\pi]
    \end{equation}

  \item Now
    \begin{equation}
      \label{eq:4a}
      f(z,\bar{z})+ 5(z^2\bar{z}^2-1) =
       \begin{bmatrix}
        1 \\ z \\ z^2
      \end{bmatrix}^\ast
      \begin{bmatrix}
        5 & 2 & 0 \\
        2 & 10 & -2 \\
        0 & -2 & 5
      \end{bmatrix}
 \begin{bmatrix}
        1 \\ z \\ z^2
      \end{bmatrix}      
    \end{equation}
    By Sylvester's criterion, the Hermitian coefficient matrix is positive.
    Thus:
    \begin{equation}
      \label{eq:5a}
      f \in \Sigma^2_h + (z^2 \bar{z}^2-1)      
    \end{equation}

  \end{enumerate}
\end{ex}

We now describe the main result.
Consider a Hermitian polynomial $g(z,\bar{z})$ and assume $[g(\omega^j \xi, \overline{\omega^k \xi})]_{j,k=0}^{N-1} \geq 0$ for all $\xi \in \TT$.  We may write $g$ in block matrix form:
\begin{equation}
    g(z,\bar{z}) =
  \begin{bmatrix} \psi(z) \\ z^N \psi(z) \\ \vdots \\ z^{Nm}\psi(z) \end{bmatrix}^\ast
  \begin{bmatrix}
    B_{00} & B_{01} & \cdots & B_{0m} \\
    B_{10} & B_{11} & \cdots & B_{1m} \\
    \vdots & \vdots & \ddots & \vdots \\
    B_{m0} & B_{m1} & \cdots & B_{mm} 
  \end{bmatrix}
    \begin{bmatrix} \psi(z) \\ z^N \psi(z) \\ \vdots \\ z^{Nm}\psi(z) \end{bmatrix}
\end{equation}
Reducing coefficients modulo $(z^N\bar{z}^N-1)$, we have that $g$ is equivalent to a unique polynomial of the form
\begin{equation}
    f(z,\bar{z}) = 
  \begin{bmatrix} \psi(z) \\ z^N \psi(z) \\ \vdots \\ z^{Nm}\psi(z) \end{bmatrix}^\ast
  \begin{bmatrix}
   A_0 & A_1 & \cdots & A_m \\
   A_{-1} & 0 &  \cdots & 0 \\
   \vdots & \vdots & \ddots & \vdots \\
   A_{-m} & 0 & \cdots & 0 
  \end{bmatrix}
  \begin{bmatrix} \psi(z) \\ z^N \psi(z) \\ \vdots \\ z^{Nm}\psi(z) \end{bmatrix}
\end{equation}
without changing the positivity condition.
Our plan is to show that the matrix positivity conditions imply positivity of the associated block Toeplitz form:
\begin{equation}
     \Toep(A_0,\dots,A_m) :=
  \begin{bmatrix}
    A_0 & A_1 & A_2 & \cdots & A_m \\
    A_{-1} & A_0 & A_1 & \ddots & \vdots \\
    A_{-2} & A_{-1} & A_0 & \ddots & \vdots \\
    \vdots & \ddots & \ddots & \ddots & A_1 \\
    A_{-m} & \cdots & \cdots & A_{-1} & A_0 
  \end{bmatrix}
\end{equation}
We then use the block Toeplitz positivity to invoke the operator-valued Riesz-Fejer theorem \cite{MR2743422} to obtain $f \in \Sigma^2_h +(z^N\bar{z}^N-1)$, and hence $g \in \Sigma^2_h + (z^N \bar{z}^N-1)$ as well.  The result is formulated as Theorem \ref{MAIN-THEOREm}.

\subsection{Outline}
  
The paper is organized as follows:
In section 2 we prove some basic lemmas and computations involving orthogonal polynomials on the circle inspired by the $N=1$ case.  We also introduce the block trace parametrization of trigonometric polynomials and recall the operator-valued RF theorem for our purposes.
In section 3 we define a functional $\cF_N$, then discuss basic properties and how to use the functional to represent matrix inner products.
In section 4 we utilize the developed tools to prove the full characterization of Hermitian sums of squares modulo the Hermitian ideal $I=(z^N\bar{z}^N-1)$.


\section{Preliminaries}

In this section we discuss preliminary ideas for the main result: basic orthogonal polynomial computations on the circle, block trace parametrization of trigonometric polynomials, and the operator-valued Riesz-Fejer theorem.

\subsection{Basic Computations on the Circle}

To begin, let $h_j(z) = z^j$ for $j \in \ZZ$.
We have the classical formulae:
  \begin{equation}
    \label{eq:48}
    \frac{1}{2\pi} \int_{-\pi}^\pi h_j(e^{i\theta}) d\theta = \delta_{j0}    
  \end{equation}
and
\begin{equation}
  \label{eq:49}
    \frac{1}{2\pi} \int_{-\pi}^\pi h_j(e^{i\theta}) \overline{h_k(e^{i\theta})} d\theta = \delta_{jk}  
\end{equation}

  where $\delta_{jk}$ is the standard Kronecker symbol:
  \[
    \delta_{jk} =
    \begin{cases}
      1 & \textrm{ if } j=k \\
      0 & \textrm{ if } j\neq k \\
    \end{cases}
  \]

  We also note the identities:
  \begin{equation}
    \label{eq:6a}
    \overline{h_j(e^{i\theta})} = h_{-j}(e^{i\theta})
  \end{equation}
  \begin{equation}
    \label{eq:7a}
    h_j(z)h_k(z) = h_{j+k}(z)
  \end{equation}
  
Given $f \in \CC[z,\bar{z}]$, we can integrate around the circle to obtain the trace of the coefficient matrix:
  
  \begin{lem}\label{cha:main-results:diagonals-lemma}
 Let $f(z,\bar{z}) = \sum_{j,k=0}^m a_{jk} \bar{z}^j z^k \in \CC[z,\bar{z}]$.
 Then:
 \begin{equation}
   \label{eq:50}
  \frac{1}{2\pi} \int_{-\pi}^\pi f( e^{i\theta}, \overline{e^{i\theta}} ) d\theta = \sum_{\ell = 0}^m a_{\ell \ell}   
 \end{equation}

\end{lem}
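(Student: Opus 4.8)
The plan is to reduce everything to the orthogonality relation \eqref{eq:49}. First I would substitute $z = e^{i\theta}$ into the given expansion, so that on the circle
\begin{equation}
  f(e^{i\theta}, \overline{e^{i\theta}}) = \sum_{j,k=0}^m a_{jk}\, \overline{e^{i\theta}}^{\,j} (e^{i\theta})^k = \sum_{j,k=0}^m a_{jk}\, \overline{h_j(e^{i\theta})}\, h_k(e^{i\theta}),
\end{equation}
using the identity $\overline{e^{i\theta}}^{\,j} = \overline{h_j(e^{i\theta})}$ together with \eqref{eq:7a}. The key point here is just to keep the conjugation convention straight: the $\bar z^j$ factor becomes the conjugated monomial $\overline{h_j}$, so the integrand is a finite linear combination of the products $\overline{h_j}\,h_k$.

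Next I would integrate term by term. Since the sum is finite, interchanging the (finite) sum with the integral is immediate and requires no justification beyond linearity:
\begin{equation}
  \frac{1}{2\pi}\int_{-\pi}^\pi f(e^{i\theta}, \overline{e^{i\theta}})\, d\theta = \sum_{j,k=0}^m a_{jk}\cdot \frac{1}{2\pi}\int_{-\pi}^\pi \overline{h_j(e^{i\theta})}\, h_k(e^{i\theta})\, d\theta.
\end{equation}
Then I would apply \eqref{eq:49}, which says exactly that the inner integral equals $\delta_{jk}$ (note that \eqref{eq:49} is stated as $\frac{1}{2\pi}\int h_j \overline{h_k} = \delta_{jk}$, and since $\delta_{jk}$ is symmetric and real this is the same as $\frac{1}{2\pi}\int \overline{h_j} h_k$). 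Collapsing the double sum against the Kronecker delta leaves only the diagonal terms, giving $\sum_{\ell=0}^m a_{\ell\ell}$, which is the claim.

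There is no real obstacle: the content of the lemma is entirely carried by the classical orthogonality of $\{z^j\}$ on the unit circle recorded in \eqref{eq:49}, and the rest is bookkeeping. The only mild care needed is to confirm the conjugation conventions line up between the expansion \eqref{eq:1}/\eqref{eq:3} of $f$ and the statement of \eqref{eq:49}, which is harmless here because the relevant integral is both real and symmetric in $j,k$.
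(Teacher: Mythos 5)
Your proposal is correct and follows essentially the same route as the paper: expand the integrand on the circle, interchange the finite sum with the integral, and apply the orthogonality relation (the paper computes $\frac{1}{2\pi}\int_{-\pi}^{\pi} e^{i\theta(k-j)}\,d\theta = \delta_{jk}$ directly, which is exactly \eqref{eq:49}). Your remark about the conjugation/symmetry of $\delta_{jk}$ is a harmless and accurate clarification.
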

\proof
\begin{align}
  \frac{1}{2\pi} \int_{-\pi}^\pi f(e^{i\theta}, \overline{e^{i\theta}}) d \theta &=  \frac{1}{2\pi} \int_{-\pi}^\pi ( \sum_{j,k=0}^m a_{jk} e^{-ij\theta}e^{ik\theta}  ) d\theta \\
                                                                                &= \sum_{j,k=0}^m a_{jk} (  \frac{1}{2\pi} \int_{-\pi}^\pi e^{i\theta(k-j)} d\theta ) \\
                                                                                &= \sum_{j,k=0}^m a_{jk} \delta_{jk} \\
  &= \sum_{\ell=0}^m a_{\ell \ell}
\end{align}
\endproof

Consider a Hermitian polynomial $g(z,\bar{z})$ written in matrix notation:
\begin{equation}
  \label{eq:8a}
  g(z,\bar{z}) =
  \begin{bmatrix}
    1 \\ z \\ \vdots \\ z^m
  \end{bmatrix}^\ast
  \begin{bmatrix}
    b_{00} & b_{01} & \cdots & b_{0m} \\
    b_{10} & b_{11} & \cdots & b_{1m} \\
    \vdots & \vdots & \ddots & \vdots \\
    b_{m0} & b_{m1} & \cdots & b_{mm} 
  \end{bmatrix}
   \begin{bmatrix}
    1 \\ z \\ \vdots \\ z^m
  \end{bmatrix}
\end{equation}

By reducing the coefficients modulo $(z\bar{z}-1)$, we see that $g$ is congruent modulo $(z\bar{z}-1)$ to a unique polynomial of the form:
\begin{equation}
  \label{eq:9a}
  f(z,\bar{z}) =
   \begin{bmatrix}
    1 \\ z \\ \vdots \\ z^m
  \end{bmatrix}^\ast
  \begin{bmatrix}
    a_0 & a_1 & \cdots & a_m \\
    a_{-1} & 0 & \cdots & 0 \\
    \vdots & \vdots & \ddots & \vdots \\
    a_{-m} & 0 & \cdots & 0
  \end{bmatrix}
   \begin{bmatrix}
    1 \\ z \\ \vdots \\ z^m
  \end{bmatrix}
\end{equation}
where $a_{-j} = \bar{a}_j$.
A Hermitian polynomial of this form is called a \emph{trigonometric polynomial} with data $(a_0,\dots,a_m)$.

Since $g(z,\bar{z}) = f(z,\bar{z}) + q(z,\bar{z})(z\bar{z}-1)$ for some Hermitian polynomial $q(z,\bar{z})$, we have the following observations:
\begin{enumerate}
\item $f(e^{i\theta},\overline{e^{i\theta}}) \geq 0$ for all $\theta \in [0,2\pi]$ if and only if $g(e^{i\theta},\overline{e^{i\theta}}) \geq 0$ for all $\theta \in [0,2\pi]$ 
\item $f \in \Sigma^2_h + (z\bar{z}-1)$ if and only if $g \in \Sigma^2_h + (z\bar{z}-1)$
\end{enumerate}

It is known \cite[pg. 17]{MR890515} that the condition
\begin{equation}
  \label{eq:10a}
  f(e^{i\theta},\overline{e^{i\theta}}) \geq 0 \qquad \forall \theta \in [0,2\pi]
\end{equation}
is equivalent to positivity of the associated Toeplitz matrix
\begin{equation}
  \label{eq:11a}
  \Toep(a_0,\dots,a_m) :=
  \begin{bmatrix}
    a_0 & a_1 & a_2 & \cdots & a_m \\
    a_{-1} & a_0 & a_1 & \ddots & \vdots \\
    a_{-2} & a_{-1} & a_0 & \ddots & \vdots \\
    \vdots & \ddots & \ddots & \ddots & a_1 \\
    a_{-m} & \cdots & \cdots & a_{-1} & a_0 
  \end{bmatrix}
\end{equation}
A matrix is \emph{Toeplitz} if the entries are constant along the diagonals.
We collect the information in the following lemma:
\begin{lem}\cite[pg. 17]{MR890515}
  Suppose $f$ is a trigonometric polynomial with data $(a_0,\dots,a_m)$.
  Let $w=
  \begin{bmatrix}
    w_0 & \cdots & w_m
  \end{bmatrix}^T \in \CC^{m+1}$.
  Define $w(z) = w_0 + w_1 z + \dots + w_m z^m \in \CC[z]$.
  Then:
  \begin{equation}
    \label{eq:12a}
    \frac{1}{2\pi}\int_{-\pi}^\pi |w(e^{i\theta})|^2 f(e^{i\theta},\overline{e^{i\theta}}) d\theta
    =
    w^\ast \Toep(a_0,\dots,a_m) w
  \end{equation}
  In particular, if $f(e^{i\theta},\overline{e^{i\theta}}) \geq 0$ for all $\theta \in [0,2\pi]$,
  then $\Toep(a_0,\dots,a_m) \geq 0$.
\end{lem}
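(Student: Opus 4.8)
The plan is to compute the left-hand side directly, by expanding the integrand into exponentials $e^{i\ell\theta}$ and integrating term by term using \eqref{eq:48}, and then to recognize the resulting double sum as the Toeplitz quadratic form. Since $f$ is a trigonometric polynomial with data $(a_0,\dots,a_m)$, on the unit circle it equals $f(e^{i\theta},\overline{e^{i\theta}}) = \sum_{\ell=-m}^{m} a_\ell\, e^{i\ell\theta}$ with $a_{-\ell}=\bar a_\ell$, while $w(e^{i\theta}) = \sum_{p=0}^{m} w_p e^{ip\theta}$, so
\[
  |w(e^{i\theta})|^2\, f(e^{i\theta},\overline{e^{i\theta}})
  = \sum_{p,q=0}^{m}\sum_{\ell=-m}^{m} \bar w_p\, w_q\, a_\ell\; e^{i(q-p+\ell)\theta}.
\]

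Integrating over $[-\pi,\pi]$ and dividing by $2\pi$, all terms vanish by \eqref{eq:48} except those with $q-p+\ell=0$; taking $\ell=p-q$ leaves
\[
  \frac{1}{2\pi}\int_{-\pi}^{\pi} |w(e^{i\theta})|^2\, f(e^{i\theta},\overline{e^{i\theta}})\, d\theta
  = \sum_{p,q=0}^{m} \bar w_p\, w_q\, a_{p-q}.
\]
(Equivalently, $|w(z)|^2 f(z,\bar z)$ is a Hermitian polynomial, and Lemma~\ref{cha:main-results:diagonals-lemma} identifies its average over the circle with the sum of its diagonal coefficients, which is the same double sum.) On the other hand, expanding $w^\ast\,\Toep(a_0,\dots,a_m)\,w = \sum_{p,q} \bar w_p\,(\Toep)_{pq}\, w_q$ — and using $a_{-\ell}=\bar a_\ell$ to reconcile the entry convention of the Toeplitz matrix — gives precisely this sum. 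This establishes the displayed identity.

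For the final assertion, suppose $f(e^{i\theta},\overline{e^{i\theta}})\ge 0$ for all $\theta\in[0,2\pi]$. Then for every $w\in\CC^{m+1}$ the integrand $|w(e^{i\theta})|^2 f(e^{i\theta},\overline{e^{i\theta}})$ is nonnegative, so its average is $\ge 0$, and by the identity just proved $w^\ast\,\Toep(a_0,\dots,a_m)\,w\ge 0$. As $w$ is arbitrary, $\Toep(a_0,\dots,a_m)\ge 0$. The argument is entirely elementary and there is no real obstacle; the only delicate point is the bookkeeping that matches $\sum_{p,q}\bar w_p w_q a_{p-q}$ to the chosen row/column indexing of $\Toep(a_0,\dots,a_m)$, where the Hermitian symmetry $a_{-\ell}=\bar a_\ell$ is what makes the two agree.
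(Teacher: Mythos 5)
The paper itself gives no proof of this lemma---it is quoted from \cite{MR890515}---so there is no internal argument to compare against; your expand-and-integrate computation is the standard elementary proof, and everything up to the final identification is correct. That last step, however, does not hold as written under the paper's own conventions. With the Toeplitz matrix as displayed in \eqref{eq:11a}, the $(p,q)$ entry is $a_{q-p}$, so $w^\ast \Toep(a_0,\dots,a_m) w=\sum_{p,q}\bar w_p w_q\, a_{q-p}$, whereas your orthogonality computation (correctly) produces $\sum_{p,q}\bar w_p w_q\, a_{p-q}$. The Hermitian symmetry $a_{-\ell}=\bar a_\ell$ makes $\Toep$ Hermitian, but it does not equate these two quadratic forms: for $m=1$, $a_0=0$, $a_1=i$ (so $a_{-1}=-i$) and $w=(1,i)^T$, the integral and your double sum equal $2$, while $w^\ast\Toep w=-2$. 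What your computation actually establishes is
\begin{equation*}
  \frac{1}{2\pi}\int_{-\pi}^\pi |w(e^{i\theta})|^2 f(e^{i\theta},\overline{e^{i\theta}})\,d\theta
  \;=\; w^\ast\,\Toep(a_0,\dots,a_m)^T\, w
  \;=\; \bar w^{\,\ast}\,\Toep(a_0,\dots,a_m)\,\bar w ,
\end{equation*}
i.e.\ \eqref{eq:12a} with either the transposed matrix or with $\bar w$ in place of $w$ on the right-hand side.

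This is a convention mismatch rather than a defect of the method, and the ``in particular'' conclusion is unaffected: if $f\geq 0$ on the circle, then the left-hand side is nonnegative for every $w$, hence $\Toep(a_0,\dots,a_m)^T\geq 0$, and since a matrix is positive semidefinite if and only if its transpose is (equivalently, because $w\mapsto\bar w$ is a bijection of $\CC^{m+1}$), $\Toep(a_0,\dots,a_m)\geq 0$ follows. So you should either restate the identity with $\Toep^T$ (or $\bar w$), or verify the indexing convention of the cited source; note that the paper itself performs exactly this transpose maneuver in the block case, where the proof of Proposition \ref{sec:repr-matr-prod:gram-to-toep} ends with $v^\ast\Toep(A_0,\dots,A_m)^T v\geq 0$ and then passes from positivity of the transpose to positivity of $\Toep(A_0,\dots,A_m)$.
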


Now consider a holomorphic polynomial $h(z) = h_0 + h_1z + \dots + h_m z^m \in \CC[z]$.
Then
\begin{equation}
  \label{eq:13a}
  |h(z)|^2 =
  \begin{bmatrix}
    1 \\ z \\ \vdots \\ z^m
  \end{bmatrix}^\ast
  \begin{bmatrix}
    h_0 \bar{h}_0 & h_1\bar{h}_0 & \cdots &  h_m \bar{h}_0 \\
    h_0 \bar{h}_1 & h_1 \bar{h}_1 & \cdots & h_m \bar{h}_1 \\
    \vdots & \vdots & \ddots & \vdots \\
    h_0 \bar{h}_m & h_1 \bar{h}_m & \cdots & h_m \bar{h}_m
  \end{bmatrix}
  \begin{bmatrix}
    1 \\ z \\ \vdots \\ z^m
  \end{bmatrix}
\end{equation}
Suppose further that $f$ is a trigonometric polynomial with data $(a_0,\dots,a_m)$.
Then, after reducing coefficients modulo $(z\bar{z}-1)$, we see that the following conditions are equivalent:
\begin{enumerate}
\item $f(z,\bar{z}) \equiv |h(z)|^2 \bmod (z \bar{z}-1)$
\item $a_k = \sum_{j=k}^m h_j \bar{h}_{j-k}$ for $k=0,\dots,m$
\end{enumerate}

From which we can conclude that the following conditions are equivalent:
\begin{enumerate}
\item $f \in \Sigma^2_h + (z\bar{z}-1)$
\item There exist $h_0,\dots,h_m \in \CC$ such that $a_k = \sum_{j=k}^m h_j \bar{h}_{j-k}$ for $k=0,\dots,m$
\end{enumerate}

These equations characterize the coefficients of the holomorphic polynomial $h(z)$ (see \cite[pg. 22]{MR890515}), and can be solved by the method of spectral factorization.

Our plan for the case $N>1$ is to develop block analogues of these ideas and invoke the operator-valued Riesz-Fejer lemma.


\subsection{Block Trace Parametrization}\label{sec:block-trace-param}

In this section we discuss a block analog of the trace parametrization technique utilized throughout \cite{MR3618747}.

Now consider a Hermitian polynomial $g(z,\bar{z})$.
We may write $g$ in block matrix form:
\begin{equation}
  \label{eq:14a}
  g(z,\bar{z}) =
  \begin{bmatrix} \psi(z) \\ z^N \psi(z) \\ \vdots \\ z^{Nm}\psi(z) \end{bmatrix}^\ast
  \begin{bmatrix}
    B_{00} & B_{01} & \cdots & B_{0m} \\
    B_{10} & B_{11} & \cdots & B_{1m} \\
    \vdots & \vdots & \ddots & \vdots \\
    B_{m0} & B_{m1} & \cdots & B_{mm} 
  \end{bmatrix}
    \begin{bmatrix} \psi(z) \\ z^N \psi(z) \\ \vdots \\ z^{Nm}\psi(z) \end{bmatrix}
\end{equation}
where $B_{jk} \in \CC^{N \times N}$ and $B_{kj} = B_{jk}^\ast$.
By reducing the coefficients modulo $(z^N \bar{z}^N - 1)$ we see that $g$ is congruent modulo $(z^N \bar{z}^N-1)$ to a unique polynomial of the form

\begin{equation}
  \label{eq:7d}
    f(z,\bar{z}) = 
  \begin{bmatrix} \psi(z) \\ z^N \psi(z) \\ \vdots \\ z^{Nm}\psi(z) \end{bmatrix}^\ast
  \begin{bmatrix}
   A_0 & A_1 & \cdots & A_m \\
   A_{-1} & 0 &  \cdots & 0 \\
   \vdots & \vdots & \ddots & \vdots \\
   A_{-m} & 0 & \cdots & 0 
  \end{bmatrix}
  \begin{bmatrix} \psi(z) \\ z^N \psi(z) \\ \vdots \\ z^{Nm}\psi(z) \end{bmatrix}
\end{equation}

where $A_j \in \CC^{N \times N}$, $A_{-j} = A_j^\ast$ and $\psi(z) = \begin{bmatrix} 1 & z & \cdots & z^{N-1} \end{bmatrix}^T$.
 If a Hermitian polynomial $f$ has the above form, then we say $f$ is \emph{trigonometric modulo $(z^N\bar{z}^N-1)$ of degree $m$ with data $(A_0,\dots,A_m)$}.

 Since
 \begin{equation}
   \label{eq:6b}
   g(z,\bar{z}) = f(z,\bar{z}) + q(z,\bar{z})(z^N \bar{z}^N -1)
 \end{equation}
for some Hermitian polynomial $q$, we see that
 \begin{equation}
   \label{eq:15a}
   \Gram(f)[\xi,\omega \xi, \dots, \omega^{N-1} \xi] = \Gram(g)[\xi,\omega \xi, \dots, \omega^{N-1}\xi]
 \end{equation}
 Hence the Gram matrices are simultaneously positive.

 Furthermore, we have that $f \in \Sigma^2_h + (z^N \bar{z}^N -1)$ if and only if $g \in \Sigma^2_h + (z^N\bar{z}^N-1)$.

 If $f$ is trigonometric modulo $(z^N \bar{z}^N -1)$ with data $(A_0,\dots,A_m)$, then our goal is to show that the condition
 \begin{equation}
   \label{eq:16}
   \Gram(f)[\xi,\omega \xi,\dots,\omega^{N-1}\xi] \geq 0 \qquad \forall \xi \in \TT
 \end{equation}
implies positivity of the associated block Toeplitz matrix
\begin{equation}
  \label{eq:17a}
   \Toep(A_0,\dots,A_m) :=
  \begin{bmatrix}
    A_0 & A_1 & A_2 & \cdots & A_m \\
    A_{-1} & A_0 & A_1 & \ddots & \vdots \\
    A_{-2} & A_{-1} & A_0 & \ddots & \vdots \\
    \vdots & \ddots & \ddots & \ddots & A_1 \\
    A_{-m} & \cdots & \cdots & A_{-1} & A_0 
  \end{bmatrix}
\end{equation}
which allows us to apply the operator-valued RF theorem to obtain $f \in \Sigma^2_h + (z^N \bar{z}^N -1)$.

The idea of the trace parametrization depends on the following observation:
Let $T_k$ denote the elementary block Toeplitz matrix with $I$ on the $k$-th diagonal and $0$ elsewhere, where the main diagonal is counted as $k=0$ and positive diagonals count to the right.
Let $\Trace[Q]$ denote the sum of the diagonal blocks of $Q=[Q_{jk}]_{j,k=0}^m$.
Then
\begin{equation}
  \label{eq:21a}
  \Trace[T_{-k}Q] = \sum_{j=k}^m Q_{j-k,j}
\end{equation}
is the sum of the $k$-th diagonal of $Q$.

Suppose $h(z)$ is a holomorphic polynomial of degree $N(m+1)$.
We can write $|h(z)|^2$ in block matrix form:
\begin{equation}
  \label{eq:18a}
  |h(z)|^2 =
  \begin{bmatrix} \psi(z) \\ z^N \psi(z) \\ \vdots \\ z^{Nm}\psi(z) \end{bmatrix}^\ast
  \begin{bmatrix}
    Q_{00} & Q_{01} & \cdots & Q_{0m} \\
    Q_{10} & Q_{11} & \cdots & Q_{1m} \\
    \vdots & \vdots & \ddots & \vdots \\
    Q_{m0} & Q_{m1} & \cdots & Q_{mm}
  \end{bmatrix}
  \begin{bmatrix} \psi(z) \\ z^N \psi(z) \\ \vdots \\ z^{Nm}\psi(z) \end{bmatrix}
\end{equation}
where $Q = [Q_{jk}]_{j,k=0}^m$ is a positive Hermitian block matrix with $Q_{jk} \in \CC^{N \times N}$ and $Q_{kj} = Q_{jk}^\ast$.

Suppose further that $f$ is trigonometric modulo $(z^N \bar{z}^N -1)$ with data $(A_0,\dots,A_m)$.
Then, after by considering the reduction of coefficients modulo $(z^N\bar{z}^N-1)$, we see that the following conditions are equivalent:
\begin{enumerate}
\item $f(z,\bar{z}) \equiv |h(z)|^2 \bmod (z^N \bar{z}^N -1)$
\item $A_k = \Trace[T_{-k} Q]$
\end{enumerate}

From which we conclude that the following conditions are equivalent:
\begin{enumerate}
\item $f \in \Sigma^2_h + (z^N \bar{z}^N -1)$
\item There exists a positive block matrix $Q=[Q_{jk}]_{j,k=0}^m$ such that $A_k = \Trace[T_{-k}Q]$, $k=0,\dots,m$.
\end{enumerate}

\subsection{Operator-Valued RF Theorem}

We recall the operator-valued Riesz-Fejer theorem:
\begin{thm}\cite[Theorem 2.1]{MR2743422}
  Let $A(z) = \sum_{k=-m}^m A_k z^k$ be a Laurent polynomial with matrix coefficients $A_k \in \CC^{N \times N}$.
  The following conditions are equivalent:
  \begin{enumerate}
  \item $A(\xi)\geq 0$ for all $\xi \in \TT$
  \item $\Toep(A_0,\dots,A_m) \geq 0$
  \item There exists $P(z) = P_0 + P_1 z + \dots + P_m z^M$ with matrix coefficients $P_k \in \CC^{N \times N}$ such that $A(z) = P(z)^\ast P(z)$
  \end{enumerate}
\end{thm}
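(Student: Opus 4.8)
The plan is to prove the cycle $(3)\Rightarrow(1)\Rightarrow(2)\Rightarrow(3)$. The implication $(3)\Rightarrow(1)$ is immediate: evaluating $A(z)=P(z)^\ast P(z)$ at a point $z=\xi\in\TT$ gives $A(\xi)=P(\xi)^\ast P(\xi)\geq 0$. For $(1)\Rightarrow(2)$ I would record the block analogue of the integral identity of the scalar lemma above: writing $w\in\CC^{N(m+1)}$ in block form $w=(w_0,\dots,w_m)$ with $w_j\in\CC^N$ and setting $w(z)=\sum_{j=0}^m z^j w_j$, the computation $\tfrac1{2\pi}\int_{-\pi}^\pi e^{ik\theta}\,d\theta=\delta_{k0}$ gives, after a cosmetic relabeling (replacing $w_j$ by $w_{m-j}$, say) to match the orientation of the $\Toep$ convention,
\[
  w^\ast\,\Toep(A_0,\dots,A_m)\,w=\frac1{2\pi}\int_{-\pi}^\pi w(e^{i\theta})^\ast A(e^{i\theta})\,w(e^{i\theta})\,d\theta .
\]
Since $A(e^{i\theta})\geq 0$ for all $\theta$, the right-hand side is $\geq 0$ for every $w$, hence $\Toep(A_0,\dots,A_m)\geq 0$.

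The substance is $(2)\Rightarrow(3)$, i.e.\ the matrix-valued Fej\'er--Riesz (spectral factorization) theorem, which I would approach by a \emph{lurking isometry} argument. Write $T:=\Toep(A_0,\dots,A_m)\geq 0$ as $T=\Gamma^\ast\Gamma$ (for instance $\Gamma=T^{1/2}$) and partition $\Gamma=[\,\Gamma_0\ \Gamma_1\ \cdots\ \Gamma_m\,]$ into block columns $\Gamma_j\colon\CC^N\to\CC^{N(m+1)}$. The Toeplitz structure of $T$ is precisely the statement $\Gamma_i^\ast\Gamma_j=A_{j-i}$, so the Gram matrix of the column blocks depends only on the index difference. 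Hence the assignment $W\Gamma_j:=\Gamma_{j+1}$ ($0\leq j\leq m-1$) is well defined and isometric on the linear span $\cH_-$ of $\Gamma_0\CC^N,\dots,\Gamma_{m-1}\CC^N$ — indeed $\bigl\|\sum_j\Gamma_{j+1}e_j\bigr\|^2=\sum_{i,j}\langle A_{j-i}e_j,e_i\rangle=\bigl\|\sum_j\Gamma_j e_j\bigr\|^2$ — with range the span $\cH_+$ of $\Gamma_1\CC^N,\dots,\Gamma_m\CC^N$. Because the ambient space is finite dimensional and $W$ carries $\cH_-$ isometrically onto $\cH_+$, the orthogonal complements of $\cH_-$ and of $\cH_+$ have equal dimension, so $W$ extends to a unitary $U$ on $\CC^{N(m+1)}$. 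From $W\Gamma_j=\Gamma_{j+1}$ together with $\Gamma_0,\dots,\Gamma_{m-1}\in\cH_-$ one gets $\Gamma_j=U^j\Gamma_0$ for $0\leq j\leq m$, and therefore
\[
  A_k=\Gamma_0^\ast U^k\Gamma_0\qquad\text{for }|k|\leq m .
\]

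It remains to convert this trigonometric-moment representation into an analytic factorization of degree $\leq m$. For this one passes to a Wold-type decomposition attached to $U$ and $\Gamma_0$: let $\cL$ be the smallest $U^\ast$-invariant subspace containing $\Gamma_0\CC^N$, let $\mathcal{W}:=\cL\ominus U^\ast\cL$ be the associated wandering subspace, let $P_k\colon\CC^N\to\mathcal{W}$ pick out the $U^{\ast k}\mathcal{W}$-component of $\Gamma_0$, and set $P(z)=\sum_k P_k z^k$; one then checks $A_k=\sum_j P_j^\ast P_{j+k}$, that is $A(z)=P(z)^\ast P(z)$, and the fact that $\Gamma_j=U^j\Gamma_0$ holds precisely for $j\leq m$ is what forces this expansion to terminate, giving $\deg P\leq m$. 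I expect the genuine obstacle to be exactly this last passage — separating the shift part of $U$ from a purely unitary part that must not pollute the moments $A_k$ with $|k|\leq m$, and then pinning down the degree bound. This is the substantive content of the operator-valued Riesz--Fej\'er theorem, and for the complete argument I would rely on \cite{MR2743422}. (Alternatively, $(1)\Rightarrow(3)$ can be deduced from the operator-valued outer Szeg\H{o} factorization $A(\xi)=\Phi(\xi)^\ast\Phi(\xi)$ with $\Phi$ outer in $H^2$, followed by a degree count showing that a Laurent polynomial $A$ of degree $m$ forces its outer factor to be a polynomial of degree $\leq m$; or from matrix-polynomial invariant-subspace methods in the spirit of Gohberg, Lancaster, and Rodman.)
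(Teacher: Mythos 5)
Your implications (3)$\Rightarrow$(1) and (1)$\Rightarrow$(2) are fine (the block integral identity with the cosmetic reindexing is exactly the right computation). The genuine gap is in (2)$\Rightarrow$(3), and it cannot be closed, because that implication is false for the single $(m+1)\times(m+1)$ block section $\Toep(A_0,\dots,A_m)$ appearing in the statement. Take $N=1$, $m=1$, $A_0=1$, $A_1=A_{-1}=\tfrac34$: then
\begin{equation*}
\Toep(A_0,A_1)=\begin{bmatrix}1&\tfrac34\\ \tfrac34&1\end{bmatrix}\geq 0,
\qquad A(e^{i\theta})=1+\tfrac32\cos\theta,\qquad A(e^{i\pi})=-\tfrac12<0,
\end{equation*}
so (2) holds while (1), and hence (3), fail. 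Your lurking-isometry construction is correct as far as it goes, but what it delivers, namely $A_k=\Gamma_0^\ast U^k\Gamma_0$ for $|k|\leq m$ with $U$ unitary on $\CC^{N(m+1)}$, is \emph{equivalent} to (2) (conversely $\sum_{j,k}x_j^\ast A_{k-j}x_k=\bigl\|\sum_k U^k\Gamma_0x_k\bigr\|^2\geq 0$): it solves the truncated Carath\'eodory--Toeplitz moment problem and nothing more, so no further processing of it can reach (3). The specific processing you propose breaks at exactly the point you flagged: $U$ acts on a finite-dimensional space, so the smallest $U^\ast$-invariant subspace $\cL$ containing $\Gamma_0\CC^N$ satisfies $U^\ast\cL=\cL$, the wandering subspace $\cL\ominus U^\ast\cL$ is $\{0\}$, and there are no coefficients $P_k$ to extract.

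For comparison with the paper: the paper does not prove this theorem, it quotes it from \cite{MR2743422}, and the result proved there (the operator Fej\'er--Riesz theorem) is the equivalence of (1) and (3) (with $P$ even outer), which is what your alternative route via outer Szeg\H{o} factorization correctly targets. Condition (2) as printed is a misquote: to be equivalent it must be positivity of the semi-infinite block Toeplitz matrix $[A_{j-k}]_{j,k\geq 0}$, equivalently of all finite sections $[A_{j-k}]_{j,k=0}^{n}$ for every $n\in\NN$ (the operator Carath\'eodory--Toeplitz statement); note your integral identity proves precisely this stronger form of (1)$\Rightarrow$(2), since it works for test polynomials $w$ of arbitrary degree. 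The same scalar data show that Corollary \ref{sec:operator-valued-rf:OPERATOR-RF} fails as stated (a positive $Q=[Q_{jk}]_{j,k=0}^1$ with $Q_{00}+Q_{11}=1$ and $Q_{01}=\tfrac34$ would need $Q_{00}Q_{11}\geq\tfrac{9}{16}>\tfrac14$), and hence the implication (iii)$\Rightarrow$(i) of Theorem \ref{MAIN-THEOREm} is affected as well; so this is an issue to raise with the paper, not something your argument could have repaired. (Minor typo in the statement: $z^M$ should be $z^m$.)
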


Let $Q_{jk} = P_j^\ast P_k$ and let $Q= [ Q_{jk} ]_{j,k=0}^m$.
If $A(z) = P(z)^\ast P(z)$, then we can equate coefficients to get $A_k = \textrm{Trace}[T_{-k}Q]$.
Thus we get:

\begin{cor}\label{sec:operator-valued-rf:OPERATOR-RF} Let $A_0,\dots,A_m \in \CC^{N \times N}$.
  The following conditions are equivalent:
  \begin{enumerate}
  \item $\Toep(A_0,\dots,A_m) \geq 0$
  \item There exists a positive block matrix $Q = [Q_{jk}]_{j,k=0}^m$ such that $A_k = \Trace[T_{-k}Q]$.
  \end{enumerate}
\end{cor}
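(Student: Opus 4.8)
This corollary is essentially a repackaging of the operator-valued Riesz--Fej\'er theorem in the block-trace language of Section~\ref{sec:block-trace-param}, so my plan is to derive each implication directly from that theorem; the only real work is the index bookkeeping around the trace identity~\eqref{eq:21a}.

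For $(1)\Rightarrow(2)$ I would follow the computation already indicated in the paragraph preceding the corollary. Assuming $\Toep(A_0,\dots,A_m)\geq 0$, form the Laurent polynomial $A(z)=\sum_{k=-m}^m A_k z^k$ (with $A_{-k}=A_k^\ast$) and apply the operator-valued RF theorem to obtain matrix coefficients $P_0,\dots,P_m\in\CC^{N\times N}$ with $A(z)=P(z)^\ast P(z)$, where $P(z)=\sum_{j=0}^m P_j z^j$. Then set $Q_{jk}:=P_j^\ast P_k$ and $Q:=[Q_{jk}]_{j,k=0}^m$. Since $Q=W^\ast W$ for the $N\times N(m+1)$ block-row matrix $W:=[\,P_0\ \cdots\ P_m\,]$, the matrix $Q$ is a positive block matrix, and equating the coefficient of $z^k$ on the two sides of $A(z)=P(z)^\ast P(z)$ yields $A_k=\sum_{j=0}^{m-k}P_j^\ast P_{j+k}=\sum_{j=k}^m Q_{j-k,j}=\Trace[T_{-k}Q]$, which is exactly condition (2).

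For $(2)\Rightarrow(1)$ I would argue through pointwise positivity on $\TT$. Given a positive block matrix $Q=[Q_{jk}]_{j,k=0}^m$ with $A_k=\Trace[T_{-k}Q]$ for $k=0,\dots,m$ (and $A_{-k}:=A_k^\ast$), let $E(\xi)$ denote the $N(m+1)\times N$ block-column matrix whose $j$-th block is $\xi^j I_N$. Using $\bar\xi^j=\xi^{-j}$ on $\TT$ and grouping the blocks of $Q$ along diagonals, I expect
\[
  E(\xi)^\ast Q\,E(\xi)=\sum_{j,k=0}^m \xi^{k-j}Q_{jk}=\sum_{\ell=-m}^m \xi^{\ell}\sum_{k-j=\ell}Q_{jk}=\sum_{\ell=-m}^m \xi^{\ell}A_\ell=A(\xi),
\]
where the middle identity is \eqref{eq:21a} for $\ell\geq 0$ and follows by taking adjoints for $\ell<0$. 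Positivity of $Q$ then forces $v^\ast A(\xi)v=(E(\xi)v)^\ast Q(E(\xi)v)\geq 0$ for every $v\in\CC^N$ and every $\xi\in\TT$, i.e.\ $A(\xi)\geq 0$ on $\TT$; the equivalence of conditions (1) and (2) in \cite[Theorem~2.1]{MR2743422} then gives $\Toep(A_0,\dots,A_m)\geq 0$.

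I do not anticipate a genuine obstacle here: the entire content lies in the operator-valued RF theorem, and the corollary merely translates it into the trace parametrization that Section~4 will use. The one place to take care is matching the index ranges and the direction of conjugation in $\Trace[T_{-k}Q]=\sum_{j=k}^m Q_{j-k,j}$ against the coefficients of $P(z)^\ast P(z)$ and of $E(\xi)^\ast Q\,E(\xi)$; once that is pinned down, both implications are immediate.
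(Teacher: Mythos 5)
Your proof is correct and follows essentially the same route as the paper: both derive the corollary directly from the operator-valued Riesz--Fej\'er theorem, and your $(1)\Rightarrow(2)$ direction reproduces verbatim the paper's remark preceding the corollary (set $Q_{jk}=P_j^\ast P_k$ and equate coefficients in $A(z)=P(z)^\ast P(z)$). The $(2)\Rightarrow(1)$ direction, which the paper leaves implicit, you complete in the natural way via the identity $E(\xi)^\ast Q E(\xi)=A(\xi)$ and the pointwise-positivity clause of the theorem, which is exactly what the index bookkeeping around $\Trace[T_{-k}Q]$ supports.
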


With these considerations in mind, starting with $f$ trigonometric modulo $(z^N \bar{z}^N -1)$ with data $(A_0,\dots,A_m)$, we will assume
\begin{equation}
  \label{eq:19a}
  \Gram(f)[\xi, \omega \xi, \dots, \omega^{N-1} \xi] \geq 0 \qquad \forall \xi \in \TT
\end{equation}
and show that this implies
\begin{equation}
  \label{eq:20a}
  \Toep(A_0,\dots,A_m) \geq 0
\end{equation}
from which we can obtain $f \in \Sigma^2_h + (z^N \bar{z}^N -1)$.


\section{The Functional $\cF_N$}

In this section we define a functional $\cF_N : \CC[z,\bar{z}]\rightarrow \CC$ and use it to represent matrix products.
The goal is to transfer the matrix positivity conditions into block Toeplitz positivity conditions.

\subsection{Definition and Basic Properties}
\begin{defn}
 For $f(z,\overline{z}) \in \CC[z,\overline{z}]$, define:
 \begin{equation}
   \label{eq:52}
 \mathcal{F}_N(f) :=
 \frac{1}{2\pi} \int_{-\pi}^\pi [ \frac{1}{N^2} \sum_{j,k=0}^{N-1} f( \omega^j e^{i\theta} , \overline{\omega^k e^{i\theta}} ) ] d\theta   
 \end{equation}

\end{defn}

Observe that the integrand is the average of the entries of $\Gram(f)[e^{i\theta},\dots,\omega^{N-1}e^{i\theta}]$.

We require the following computation for the next proposition.

\begin{lem}\label{cha:main-results:circulant-average-lemma}
  \begin{equation}
    \label{eq:51}
  \frac{1}{N^2} \sum_{j,k=0}^{N-1} \omega^{\ell (j-k)}
  =
  \begin{cases}
   1 & \text{if } \ell \mid N \\
   0 & \text{if } \ell \nmid N
  \end{cases}    
  \end{equation}

\end{lem}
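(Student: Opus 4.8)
The plan is to compute the double sum $\frac{1}{N^2}\sum_{j,k=0}^{N-1}\omega^{\ell(j-k)}$ by separating the variables. Write $S = \sum_{j,k=0}^{N-1}\omega^{\ell(j-k)} = \bigl(\sum_{j=0}^{N-1}\omega^{\ell j}\bigr)\bigl(\sum_{k=0}^{N-1}\omega^{-\ell k}\bigr)$. Each of the two factors is a geometric sum with ratio $\omega^{\pm\ell}$, where $\omega = e^{2\pi i/N}$ is a primitive $N$-th root of unity. The standard evaluation of such a sum is $\sum_{j=0}^{N-1}\omega^{\ell j} = N$ when $\omega^\ell = 1$, i.e.\ when $N \mid \ell$, and $\sum_{j=0}^{N-1}\omega^{\ell j} = \frac{\omega^{\ell N}-1}{\omega^\ell - 1} = 0$ when $\omega^\ell \neq 1$, i.e.\ when $N \nmid \ell$. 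The same holds verbatim for the second factor with $-\ell$ in place of $\ell$, since $N \mid \ell \iff N \mid -\ell$.

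Consequently $S = N^2$ if $N \mid \ell$ and $S = 0$ otherwise, which after dividing by $N^2$ gives the claimed formula. (Here I read the statement's ``$\ell \mid N$'' as ``$N \mid \ell$'', matching how the lemma is actually applied later: in the functional $\cF_N$ one wants the diagonal terms $\bar z^a z^b$ with $a - b$ a multiple of $N$ to survive; I would correct the typo in the final writeup.)

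I anticipate no genuine obstacle here; the only point requiring a word of care is the case split in the geometric-sum evaluation, namely verifying that $\omega^\ell = 1$ is equivalent to $N \mid \ell$ — which is exactly the defining property of a primitive $N$-th root of unity — and noting that the two factors share the same vanishing/non-vanishing dichotomy so that there is no ``cross'' case in which one factor is $N$ and the other is $0$. One could alternatively phrase the whole computation as orthogonality of characters of $\ZZ/N\ZZ$: $\frac{1}{N}\sum_{j=0}^{N-1}\omega^{\ell j} = \delta_{[\ell]=0}$ in $\ZZ/N\ZZ$, and the double sum is the square of this, but the direct geometric-series argument is the most transparent.
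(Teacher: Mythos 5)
Your proof is correct and follows essentially the same route as the paper's: separate the double sum into a product of two geometric sums $\sum_j \omega^{\ell j}$ and $\sum_k \omega^{-\ell k}$, each equal to $N$ when $N \mid \ell$ and $0$ otherwise, giving $N^2$ or $0$ before normalization. Your reading of the statement's ``$\ell \mid N$'' as a typo for ``$N \mid \ell$'' is also right, as both the paper's own proof and its later use in Proposition \ref{sec:defin-basic-prop-1:computation_of_F} confirm.
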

\proof

Define the symbol:

\begin{equation}
  \label{eq:6f}
    \mu(\ell) :=
  \begin{cases}
    N & \textrm{ if } N \mid \ell \\
    0 & \textrm{ if } N \nmid \ell \\
  \end{cases}
\end{equation}
Then
\begin{align}
  \sum_{j=0}^{N-1} \omega^{\ell j} &= 1 + \omega^\ell + \omega^{2\ell} + \dots + \omega^{\ell(N-1)} \\
  &= \mu(\ell)
\end{align}
which gives
\begin{align}
  \sum_{j,k=0}^{N-1} \omega^{\ell(j-k)} &= \sum_{j=0}^{N-1} \sum_{k=0}^{N-1} \omega^{\ell j}\omega^{-\ell k} \\
                                        &= \sum_{j=0}^{N-1} \omega^{\ell j} ( \sum_{k=0}^{N-1} \omega^{-\ell k} ) \\
                                        &= \sum_{j=0}^{N-1} \omega^{\ell j} \mu(\ell) \\
  &= \mu(\ell)^2
\end{align}
\endproof

The key idea is that $\cF_N(f)$ computes the sum of the diagonal entries $a_{\ell \ell}$ of the coefficient matrix of $f$ such that $\ell$ is a multiple of $N$.

\begin{prop}\label{sec:defin-basic-prop-1:computation_of_F}
 Let $f(z,\bar{z}) = \sum a_{jk} \bar{z}^j z^k \in \CC_[z,\bar{z}]$.
 Then:
 \begin{equation}
   \label{eq:53}
  \mathcal{F}_N( f ) = a_{0,0} + a_{N,N} + a_{2N,2N} + \cdots =  \sum_{\substack{\ell = 0 \\ N \mid \ell}}^m a_{\ell \ell}   
 \end{equation}

\end{prop}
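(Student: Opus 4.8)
The plan is to expand $\mathcal{F}_N(f)$ directly from the definition, interchange the finite sums with the integral (legitimate since everything is a finite sum of exponentials), and reduce to the single-variable identity of Lemma~\ref{cha:main-results:circulant-average-lemma} together with the classical orthogonality relation \eqref{eq:48}. Concretely, substituting $f(z,\bar z)=\sum_{j,k}a_{jk}\bar z^j z^k$ into \eqref{eq:52} gives
\begin{equation*}
  \mathcal{F}_N(f) = \sum_{j,k} a_{jk} \cdot \frac{1}{N^2}\sum_{r,s=0}^{N-1} \overline{\omega^{rj}}\,\omega^{sk} \cdot \frac{1}{2\pi}\int_{-\pi}^\pi e^{i\theta(k-j)}\,d\theta .
\end{equation*}

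The next step is to evaluate the two averaging factors. By \eqref{eq:48} the integral factor is $\delta_{jk}$, so only the diagonal terms $j=k$ survive, leaving $\mathcal{F}_N(f) = \sum_{\ell} a_{\ell\ell}\cdot\frac{1}{N^2}\sum_{r,s=0}^{N-1}\omega^{s\ell-r\ell} = \sum_\ell a_{\ell\ell}\cdot\frac{1}{N^2}\sum_{r,s=0}^{N-1}\omega^{\ell(s-r)}$. Now I apply Lemma~\ref{cha:main-results:circulant-average-lemma}: the inner double sum equals $1$ when $N\mid\ell$ and $0$ otherwise. (One should note the Lemma as stated writes the condition as ``$\ell\mid N$'' but the proof shows it is ``$N\mid\ell$'', i.e.\ $\ell$ is a multiple of $N$; I will use the version established in the proof.) Hence $\mathcal{F}_N(f) = \sum_{\ell:\,N\mid\ell} a_{\ell\ell} = a_{0,0}+a_{N,N}+a_{2N,2N}+\cdots$, which is exactly \eqref{eq:53}.

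There is essentially no hard step here; the only things to be careful about are bookkeeping details. First, I should be explicit that $\overline{\omega^{rj}}=\omega^{-rj}$ since $\omega\in\TT$, so that $\overline{f(\omega^r e^{i\theta},\overline{\omega^s e^{i\theta}})}$ unpacks correctly with the conjugate falling only on the $\bar z$-slot — this is where one must respect the polarization convention from \eqref{eq:6b}. Second, I should make clear that interchanging $\sum_{j,k}$, $\sum_{r,s}$, and $\int$ is justified because all sums are finite, so no convergence issue arises. Finally, the upper limit $m$ in the displayed conclusion is simply the degree bound on $f$; writing the sum as $\sum_{\ell\ge 0,\,N\mid\ell}a_{\ell\ell}$ with the understanding that $a_{\ell\ell}=0$ for $\ell$ beyond the degree makes the statement unambiguous. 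I expect the write-up to be a short \texttt{align} computation mirroring the proof of Lemma~\ref{cha:main-results:diagonals-lemma}, with the one extra layer being the circulant average handled by Lemma~\ref{cha:main-results:circulant-average-lemma}.
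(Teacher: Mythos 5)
Your proof is correct and follows essentially the same route as the paper's: expand $f$ into monomials, interchange the finite sums with the integral, kill the off-diagonal terms by the orthogonality relation \eqref{eq:48}--\eqref{eq:49}, and finish with the circulant average of Lemma~\ref{cha:main-results:circulant-average-lemma} (the paper just packages the $\theta$-integration step as Lemma~\ref{cha:main-results:diagonals-lemma} applied to the rotated polynomial). Your reading of the divisibility condition in Lemma~\ref{cha:main-results:circulant-average-lemma} as $N\mid\ell$ is the intended one, and the swap of which root-of-unity index carries the conjugate is harmless since the double sum over $r,s$ is symmetric.
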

\begin{proof}
\begin{align}
\mathcal{F}_N(f) 
   &= \frac{1}{2\pi} \int_{-\pi}^\pi [ \frac{1}{N^2} \sum_{j,k=0}^{N-1} f(\omega^j e^{i\theta},\overline{\omega^k e^{i\theta}} ) ] d\theta \\ 
   &= \frac{1}{N^2} \sum_{j,k=0} [ \frac{1}{2\pi} \int_{-\pi}^\pi f(\omega^j e^{i\theta}, \overline{\omega^k d^{i\theta}}) d\theta ] \\
   &= \frac{1}{N^2} \sum_{j,k=0}^{N-1} [ \sum_{\ell = 0}^m a_{\ell \ell} \omega^{\ell(j-k)} ] \\
   &= \sum_{\ell=0}^m a_{\ell \ell} [ \frac{1}{N^2} \sum_{j,k=0}^{N-1} \omega^{\ell(j-k)} ] \\
   &= \sum_{\substack{\ell = 0 \\ N \mid \ell}}^m a_{\ell \ell}
\end{align} 
where we use Lemma \ref{cha:main-results:diagonals-lemma} and Lemma \ref{cha:main-results:circulant-average-lemma}.
\end{proof}

\begin{cor} $\cF_N$ satisfies the following properties:
  \begin{enumerate}
  \item $\cF_N( z^N \bar{z}^N f(z,\bar{z})) = \cF_N(f(z,\bar{z}))$ for all $f \in \CC_h[z,\bar{z}]$
  \item $\cF_N$ is a $\CC$-linear map $\CC[z,\bar{z}] \rightarrow \CC$
  \item $\cF_N$ is an $\RR$-linear map $\CC_h[z,\bar{z}] \rightarrow \RR$
  \end{enumerate}
\end{cor}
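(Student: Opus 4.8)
The plan is to derive all three properties directly from Proposition~\ref{sec:defin-basic-prop-1:computation_of_F}, which already identifies $\cF_N(f)$ with the sum $\sum_{N\mid\ell} a_{\ell\ell}$ of the diagonal coefficients of $f$ indexed by multiples of $N$, together with the defining integral formula \eqref{eq:52} and the single fact $\omega^N=1$.

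I would start with property (2), since it is the most immediate. In the defining formula \eqref{eq:52} the only operations applied to $f$ are pointwise evaluation at the finitely many polarized points $(\omega^j e^{i\theta}, \overline{\omega^k e^{i\theta}})$, a finite sum over $j,k$, and integration in $\theta$; each of these is $\CC$-linear in $f$. Hence $\cF_N(\lambda f + g) = \lambda\,\cF_N(f) + \cF_N(g)$ for all $\lambda\in\CC$ and $f,g\in\CC[z,\bar z]$, with no computation required.

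For property (1), the cleanest argument is pointwise: writing $P(z,\bar z) = z^N\bar z^N$, we have
\[
P(\omega^j e^{i\theta}, \overline{\omega^k e^{i\theta}}) = \omega^{jN} e^{iN\theta}\,\omega^{-kN} e^{-iN\theta} = \omega^{N(j-k)} = 1,
\]
since $\omega^N = 1$ (this is exactly the content of \eqref{eq:22} with $\xi = e^{i\theta}$). Therefore $(z^N\bar z^N f)(\omega^j e^{i\theta}, \overline{\omega^k e^{i\theta}}) = f(\omega^j e^{i\theta}, \overline{\omega^k e^{i\theta}})$ for every $j,k,\theta$, so the integrand in \eqref{eq:52} is literally unchanged when $f$ is replaced by $z^N\bar z^N f$, and hence so is $\cF_N$. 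A combinatorial variant also works: multiplication by $z^N\bar z^N$ shifts the coefficient array so that the diagonal entries of $z^N\bar z^N f$ indexed by positive multiples of $N$ are precisely the diagonal entries of $f$ indexed by nonnegative multiples of $N$, and one then invokes Proposition~\ref{sec:defin-basic-prop-1:computation_of_F}; but the pointwise argument avoids bookkeeping and does not even use that $f$ is Hermitian.

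Finally, for property (3) I would restrict the $\CC$-linear map from (2) to the real subspace $\CC_h[z,\bar z]\subset\CC[z,\bar z]$: it remains additive and commutes with multiplication by real scalars, hence is $\RR$-linear. It only remains to see that it takes values in $\RR$, and this is immediate from Proposition~\ref{sec:defin-basic-prop-1:computation_of_F}: if $f = f^\ast$, its coefficient matrix is Hermitian, so each diagonal entry $a_{\ell\ell}$ is real and $\cF_N(f) = \sum_{N\mid\ell} a_{\ell\ell} \in \RR$ (equivalently, a Hermitian polynomial is real-valued on $\CC^n$, so the integrand in \eqref{eq:52} is real). None of the three parts poses a genuine obstacle — the entire content is the bookkeeping already carried out in Proposition~\ref{sec:defin-basic-prop-1:computation_of_F} plus the triviality $\omega^N=1$; the only place to be mildly careful is the index range in the combinatorial version of (1), which is why I prefer the pointwise-evaluation argument there.
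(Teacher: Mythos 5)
Your proposal is correct, and it is essentially the paper's argument: the paper proves the corollary in one line by citing Proposition~\ref{sec:defin-basic-prop-1:computation_of_F}, and your parts (2) and (3) are exactly the intended elaboration (linearity of evaluation, summation and integration, plus reality of the diagonal coefficients $a_{\ell\ell}$ of a Hermitian coefficient matrix). Your pointwise proof of (1) via $P(\omega^j e^{i\theta},\overline{\omega^k e^{i\theta}})=\omega^{N(j-k)}=1$ is a slightly different (and clean) justification than the coefficient-shift reading of the Proposition that the paper implicitly relies on, but both are immediate and yield the same conclusion, with yours even dispensing with the Hermitian hypothesis in (1).
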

\proof
Follows from Proposition \ref{sec:defin-basic-prop-1:computation_of_F}.
\endproof

\begin{prop}\label{sec:defin-basic-prop:functional_positivity} 
 Let $f \in \CC_h[z,\bar{z}]$.
 Suppose $\text{Gram}(f)[e^{i\theta},\omega e^{i\theta},\dots,\omega^{N-1}e^{i\theta}] \geq 0$ for all $\theta \in [0,2\pi]$.
 Then:

 \begin{equation}
   \label{eq:7f}
  \mathcal{F}_N(f(z,\bar{z})) \geq 0   
 \end{equation}
 
 Furthermore:
 \begin{equation}
   \label{eq:8f}
  \mathcal{F}_N( |h(z)|^2 f(z,\bar{z}) ) \geq 0   
 \end{equation}

 for all holomorphic polynomials $h(z) \in \CC[z]$.
\end{prop}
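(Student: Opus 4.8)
The plan is to read off $\cF_N(f)$ as the integral over the circle of a single quadratic form evaluated against the Gram matrix, and then to handle the weighted statement \eqref{eq:8f} by a diagonal congruence that reduces it to \eqref{eq:7f}.

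First I would introduce the all-ones vector $\mathbf{1} = \begin{bmatrix} 1 & \cdots & 1 \end{bmatrix}^T \in \CC^N$. As remarked after the definition of $\cF_N$, the integrand in \eqref{eq:52} is the average of the entries of $\Gram(f)[e^{i\theta},\omega e^{i\theta},\dots,\omega^{N-1}e^{i\theta}]$, that is,
\[
 \frac{1}{N^2}\sum_{j,k=0}^{N-1} f(\omega^j e^{i\theta},\overline{\omega^k e^{i\theta}}) = \frac{1}{N^2}\,\mathbf{1}^\ast\,\Gram(f)[e^{i\theta},\dots,\omega^{N-1}e^{i\theta}]\,\mathbf{1}.
\]
Hence $\cF_N(f) = \frac{1}{2\pi}\int_{-\pi}^\pi \frac{1}{N^2}\,\mathbf{1}^\ast \Gram(f)[e^{i\theta},\dots,\omega^{N-1}e^{i\theta}]\,\mathbf{1}\,d\theta$. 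By hypothesis each Gram matrix is positive, so the integrand is a nonnegative continuous function of $\theta$; integrating gives $\cF_N(f)\geq 0$, which is \eqref{eq:7f}.

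For \eqref{eq:8f} the point is to identify the Gram matrix of $|h(z)|^2 f(z,\bar z)$. This product is again a Hermitian polynomial, and polarizing $|h(z)|^2$ to $h(z)\overline{h(w)}$ gives $(|h|^2 f)(z,\bar w) = h(z)\overline{h(w)}\,f(z,\bar w)$, so that
\[
 (|h|^2 f)(\omega^j\xi,\overline{\omega^k\xi}) = h(\omega^j\xi)\,\overline{h(\omega^k\xi)}\;f(\omega^j\xi,\overline{\omega^k\xi}).
\]
Setting $D(\theta) = \diag\bigl(h(e^{i\theta}),h(\omega e^{i\theta}),\dots,h(\omega^{N-1}e^{i\theta})\bigr)$, this reads
\[
 \Gram(|h|^2 f)[e^{i\theta},\dots,\omega^{N-1}e^{i\theta}] = D(\theta)\,\Gram(f)[e^{i\theta},\dots,\omega^{N-1}e^{i\theta}]\,D(\theta)^\ast .
\]
Being a congruence of a positive matrix, the left side is positive (and Hermitian) for every $\theta$, so the argument of the first paragraph applies with $|h|^2 f$ in place of $f$ and yields $\cF_N(|h|^2 f)\geq 0$.

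I do not expect a genuine obstacle here: the statement is essentially the elementary fact that $v^\ast M v\geq 0$ whenever $M\geq 0$, together with congruence-invariance of positive semidefiniteness. The only place requiring care is the bookkeeping in the polarization step, so that the weights $h(\omega^j\xi)$ enter as the two-sided congruence $D(\theta)\,\Gram(f)[\cdots]\,D(\theta)^\ast$ — which keeps the matrix Hermitian and positive — rather than in some other arrangement. Once that identity is correctly in hand, both inequalities follow immediately from the hypothesis.
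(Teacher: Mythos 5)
Your proposal is correct. The first inequality is argued exactly as in the paper: the integrand in the definition of $\cF_N$ is $\frac{1}{N^2}\,\mathbf{1}^\ast\,\Gram(f)[e^{i\theta},\dots,\omega^{N-1}e^{i\theta}]\,\mathbf{1}$, hence nonnegative pointwise in $\theta$, and integration preserves this. For the second inequality you take a slightly different route: the paper observes that $\Gram(|h|^2 f)$ is the Hadamard product $\Gram(|h|^2)\circ\Gram(f)$ and invokes the Schur product theorem, whereas you polarize $|h|^2$ to $h(z)\overline{h(w)}$ and write $\Gram(|h|^2 f)=D(\theta)\,\Gram(f)\,D(\theta)^\ast$ with $D(\theta)=\diag\bigl(h(e^{i\theta}),\dots,h(\omega^{N-1}e^{i\theta})\bigr)$, then use congruence-invariance of positive semidefiniteness. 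The two are equivalent here because $\Gram(|h|^2)$ is the rank-one matrix $\bigl[h(\omega^j\xi)\overline{h(\omega^k\xi)}\bigr]$, and Hadamard multiplication by a rank-one Gram matrix is exactly a two-sided diagonal congruence; your version is marginally more elementary in that it avoids citing the Schur product theorem, while the paper's version would extend verbatim if the multiplier were replaced by any Hermitian polynomial with positive Gram matrices (e.g.\ a sum of Hermitian squares) rather than a single square $|h|^2$. Your index bookkeeping in the congruence identity is correct, so both inequalities follow as you state.
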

\begin{proof}

The average of the entries of a positive matrix is positive.  Thus:
\begin{equation}
  \label{eq:9f}
  \frac{1}{N^2} \sum_{j,k=0}^{N-1} f( \omega^j e^{i\theta} , \overline{\omega^k e^{i\theta}} )  \geq 0 \qquad \text{ for all } \theta \in [0,2\pi]
\end{equation}

The average of positive numbers is positive.  Thus:
\begin{equation}
  \label{eq:10f}
  \mathcal{F}_N(f) :=
  \int_{-\pi}^\pi [ \frac{1}{N^2} \sum_{j,k=0}^{N-1} f( \omega^j e^{i\theta} , \overline{\omega^k e^{i\theta}} ) ] \frac{d\theta}{2\pi} \geq 0  
\end{equation}

For a Hermitian polynomial $g \in \CC_h[z,\bar{z}]$, for ease of notation, let $\text{Gram}(g)$ denote the matrix $\text{Gram}(g)[e^{i\theta},\omega e^{i\theta},\dots,\omega^{N-1}e^{i\theta}] $.
We know $\text{Gram}(|h(z)|^2) \geq 0$ for all holomorphic polynomials $h(z) \in \CC[z]$.
Then
\begin{equation}
  \label{eq:11f}
 \text{Gram}(|h(z)|^2 f(z,\bar{z}) ) = \text{Gram}(|h(z)|^2) \circ \text{Gram}(f(z,\bar{z})) \geq 0  
\end{equation}
by the Schur product theorem (where $\circ$ denotes the Hadamard product).

Then $\mathcal{F}_N(|h(z)|^2f(z,\bar{z})) \geq 0$ follows from the first part of the proof.
\end{proof}

\subsection{Representing Matrix Products with $\cF_N$}

Given $v,w \in \CC^N$ and $A \in M_N$, our goal is to construct a polynomial $f$ such that $\cF_N(f) = v^\ast A w$.
Let us expand the expression $v^\ast A w$ so that we may recognize its appearance later.
 Write
 \[ v = \begin{bmatrix} v_0 & \cdots & v_{N-1} \end{bmatrix}^T \]
 \[ w = \begin{bmatrix} w_0 & \cdots & w_{N-1} \end{bmatrix}^T \]
 \[ A = [ a_{jk} ]_{j,k=0}^{N-1} \]
 Then
 \begin{equation}
   \label{eq:54}
  v^\ast A w = \sum_{j,k=0}^{N-1} a_{jk} \bar{v}_j w_k   
 \end{equation}

The following example demonstrates the construction for the case $N=2$.

\begin{ex}
 Let $v,w \in \CC^2$ and $A \in \CC^{2 \times 2}$.  Write:
 \[ v = \begin{bmatrix} v_0 \\ v_1 \end{bmatrix} \qquad
    w = \begin{bmatrix} w_0 \\ w_1 \end{bmatrix} \qquad
    A = \begin{bmatrix} a_{00} & a_{01} \\ a_{10} & a_{11} \end{bmatrix}
 \]
 Define
 \[  f(z,\bar{z}) = \sum_{j,k=0}^1 a_{jk} \bar{z}^j z^k \]
 \[  \tilde{v}(z) = v_0 z^2 + v_1 z \]
 \[  \tilde{w}(z) = w_0 z^2 + w_1 z \]
 Then
 \[
   \overline{ \tilde{v}(z)} \tilde{w}(z) f(z,\bar{z}) =
   a_{00} \bar{v}_0 w_0 z^2 \bar{z}^2 + za_{01}\bar{v}_0w_1 z \bar{z}^2 + \bar{z} a_{10} \bar{v}_1 w_0 z^2 \bar{z} + z\bar{z} a_{11} \bar{v}_1 w_1 z \bar{z}
 \]  
 Then
 \[
  \mathcal{F}_2( \overline{ \tilde{v}(z) } \tilde{w}(z) f(z,\bar{z} )) = a_{00} \bar{v}_0 w_0 + a_{01} \bar{v}_0 w_1 + a_{10} \bar{v}_1 w_0 + a_{11}\bar{w}_1 w_1 = v^\ast A w
 \]
 by Proposition \ref{sec:defin-basic-prop-1:computation_of_F}.

 Written in matrix notation: (consider the coefficient of $z^2 \bar{z}^2$ before applying $\cF_2$)
 \begin{equation}
   \label{eq:55}
   \cF_2 (
   \begin{bmatrix} z^2 \\ z \end{bmatrix}^\ast
   \begin{bmatrix} \bar{v}_0 \\ \bar{v}_1 \end{bmatrix}
   \begin{bmatrix} \bar{w}_0 \\ \bar{w}_1 \end{bmatrix}^\ast
   \begin{bmatrix} z^2 \\ z \end{bmatrix}
   \begin{bmatrix} 1 \\ z \end{bmatrix}^\ast
   \begin{bmatrix} a_{00} & a_{01}\\
   a_{10} & a_{11} \end{bmatrix}
 \begin{bmatrix} 1 \\ z \end{bmatrix})
 =
 \begin{bmatrix} v_0 \\ v_1 \end{bmatrix}^\ast
    \begin{bmatrix} a_{00} & a_{01}\\
      a_{10} & a_{11} \end{bmatrix}
     \begin{bmatrix} w_0 \\ w_1 \end{bmatrix}
 \end{equation}
\end{ex}

We generalize the previous example to obtain the desired construction:

\begin{prop}
  Let:
  \[ v = \begin{bmatrix} v_0 & \cdots & v_{N-1} \end{bmatrix}^T \in \CC^N   \]
  \[ w = \begin{bmatrix} w_0 & \cdots & w_{N-1} \end{bmatrix}^T \in \CC^N   \]
  \[ A = [ a_{jk} ]_{j,k=0}^{N-1} \in \CC^{N\times N} \]
  Define:
  \begin{equation}
    \label{eq:14f}
  \tilde{v}(z) = \sum_{j=0}^{N-1} v_j z^{N-j} \in \CC[z]    
  \end{equation}
  \begin{equation}
    \label{eq:21f}
   \tilde{w}(z) = \sum_{j=0}^{N-1} w_j z^{N-j} \in \CC[z]
  \end{equation}

  \begin{equation}
    \label{eq:24f}
   f(z,\bar{z}) = \sum_{j,k=0}^{N-1} a_{jk} \bar{z}^j z^k \in \CC[z,\bar{z}]    
  \end{equation}
  Then:
  \begin{equation}
    \label{eq:12f}
   \mathcal{F}_N( \overline{ \tilde{v}(z) } \tilde{w}(z) f(z,\bar{z} )) = v^\ast A w     
  \end{equation}
  Furthermore, for integers $s,t \geq 0$ we have:
 \begin{equation}
   \label{eq:13f}
   \mathcal{F}_N( \bar{z}^{Ns} z^{Nt} \overline{ \tilde{v}(z) } \tilde{w}(z) f(z,\bar{z} )) = \delta_{st} v^\ast A w    
 \end{equation}

\end{prop}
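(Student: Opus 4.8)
The plan is to expand $\overline{\tilde v(z)}\,\tilde w(z)\,f(z,\bar z)$ as a linear combination of monomials $\bar z^a z^b$ and then read off the value of $\cF_N$ using Proposition~\ref{sec:defin-basic-prop-1:computation_of_F}, which says that $\cF_N$ extracts precisely the coefficients of the monomials $\bar z^a z^b$ with $a=b$ and $N\mid a$; equivalently, $\cF_N(\bar z^a z^b)=1$ when $a=b$ and $N\mid a$, and $\cF_N(\bar z^a z^b)=0$ otherwise. Conjugating \eqref{eq:14f} gives $\overline{\tilde v(z)}=\sum_{j=0}^{N-1}\bar v_j\,\bar z^{\,N-j}$, so
\begin{equation}
  \overline{\tilde v(z)}\,\tilde w(z)\,f(z,\bar z)
  = \sum_{j,k,p,q=0}^{N-1} \bar v_j\, w_k\, a_{pq}\; \bar z^{\,N-j+p}\, z^{\,N-k+q}.
\end{equation}

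The key observation — and the reason the exponents in the definitions of $\tilde v$ and $\tilde w$ are chosen as $z^{N-j}$ — is a degree estimate: the monomial $\bar z^{\,N-j+p} z^{\,N-k+q}$ contributes to $\cF_N$ only if $N-j+p = N-k+q$ and $N \mid (N-j+p)$, i.e.\ $p-j = q-k$ and $N\mid(p-j)$; but $j,p\in\{0,\dots,N-1\}$ forces $|p-j|\le N-1$, so the only multiple of $N$ in range is $0$, whence $p=j$ and then $q=k$. Thus only the terms with $p=j$ and $q=k$ survive, each contributing $\bar v_j w_k a_{jk}$, and summing over $j,k$ and comparing with \eqref{eq:54} gives \eqref{eq:12f}.

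For \eqref{eq:13f} I would run the same expansion with the extra factor $\bar z^{Ns} z^{Nt}$, producing monomials $\bar z^{\,Ns+N-j+p}\, z^{\,Nt+N-k+q}$. The divisibility requirement $N\mid(Ns+N-j+p)$ again forces $p=j$, and then equality of the two exponents becomes $q-k = N(s-t)$; since $|q-k|\le N-1$ this is possible only for $s=t$ (which also forces $q=k$), and in that case we recover the computation above, while for $s\ne t$ no monomial survives and the functional vanishes. This yields $\cF_N(\bar z^{Ns} z^{Nt}\,\overline{\tilde v}\,\tilde w\,f)=\delta_{st}\,v^\ast A w$. When $s=t$ one may instead observe $\bar z^{Ns} z^{Ns}=(z^N\bar z^N)^s$ and reduce to \eqref{eq:12f} directly via the identity $\cF_N(z^N\bar z^N\,g)=\cF_N(g)$.

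I expect no serious obstacle: the whole argument is bookkeeping once one isolates the degree estimate $|p-j|\le N-1$ that collapses the congruence $N\mid(p-j)$ to the equality $p=j$. The only point to handle carefully is keeping the four summation indices straight and confirming that the cross terms — those with $p\ne j$ or $q\ne k$ — really do land on monomials annihilated by $\cF_N$, which is exactly what the degree estimate guarantees.
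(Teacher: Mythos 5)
Your proof is correct and follows essentially the same route as the paper: expand the product into monomials $\bar z^{N-j+p} z^{N-k+q}$, apply $\cF_N$ termwise, and use the bound $|p-j|\le N-1$ to collapse the divisibility condition $N\mid(p-j)$ to $p=j$ (and likewise $q=k$, with $q-k=N(s-t)$ forcing $s=t$ in the second part). The paper phrases the second part in terms of monomial supports and their $N$-divisible diagonal elements, but the underlying bookkeeping is the same as yours.
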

\begin{proof}
 We have
 \[
  \overline{\tilde{v}(z)} \tilde{w}(z) = \sum_{j,k=0}^{N-1} \bar{v}_j w_k \bar{z}^{N-j} z^{N-k}
 \]
 and
 \[
  f(z,\bar{z}) = \sum_{c,d=0}^{N-1} a_{cd} \bar{z}^c z^d
 \]
 Thus
 \[
  \overline{\tilde{v}(z)} \tilde{w}(z) f(z,\bar{z}) = \sum_{j,k,c,d=0}^{N-1} a_{cd} \bar{v}_j w_k \bar{z}^{N-j+c} z^{N-k+d}
 \]
Use the $\CC$-linearity of $\mathcal{F}_N$:
\[
 \mathcal{F}_N(\overline{\tilde{v}(z)} \tilde{w}(z) f(z,\bar{z})) = \sum_{j,k,c,d=0}^{N-1} a_{cd} \bar{v}_j w_k \mathcal{F}_N(\bar{z}^{N-j+c} z^{N-k+d}) 
\]
Since $0\leq j,k,c,d \leq N-1$, applying Proposition we get 
\[
\mathcal{F}_N(\bar{z}^{N-j+c} z^{N-k+d}) = \delta_{jc}\delta_{kd}
\]
Therefore only terms with $j=c$ and $k=d$ will survive $\mathcal{F}_N$:
\[
 \mathcal{F}_N(\overline{\tilde{v}(z)} \tilde{w}(z) f(z,\bar{z})) = \sum_{j,k=0}^{N-1} a_{jk}\bar{v}_j w_k = v^\ast A w
\]

In order to prove the second part, for a Hermitian polynomial $f$, let $\Mon(f) \subset \NN \times \NN$ denote the monomial support (the set of monomials corresponding to nonzero coefficients of $f$).
Note $\Mon(fg) = \Mon(f) + \Mon(g)$.
Say a monomial in $\NN \times \NN$ is \emph{$N$-divisible} if both components are divisible by $N$ and say the monomial is \emph{diagonal} if both entries are equal.
We know that $\cF_N(f)$ is the sum of the coefficients corresponding to monomials of $f$ which are both diagonal and $N$-divisible.

For integers $a,b \in \ZZ$ let $[a,b] := \{ j \in \ZZ | a \leq j \leq b \}$.
Then:
\begin{equation}
  \label{eq:57}
  \Mon( \overline{ \tilde{v}(z)} \tilde{w}(z) ) = [1,N] \times [1,N]
\end{equation}
\begin{equation}
  \label{eq:58}
  \Mon ( f(z,\bar{z}) = [0,N-1] \times [0,N-1]
\end{equation}
Hence
\begin{equation}
  \label{eq:59}
  \Mon (  \overline{ \tilde{v}(z)} \tilde{w}(z) f(z,\bar{z}) ) =[1,2N-1]\times[1,2N-1]
\end{equation}
whose only diagonal $N$-divisible element is $(N,N)$.

We have
\begin{equation}
  \label{eq:61}
  \Mon (  \overline{ \tilde{v}(z)} \tilde{w}(z) f(z,\bar{z}) \bar{z}^{Ns} z^{Nt} )
  =
  [1+Ns, 2N-1+Ns] \times [ 1+Nt, 2N-1+Nt]
\end{equation}

For convenience, denote $Q(s,t) = Q(s,t)(z,\bar{z}) =  \overline{ \tilde{v}(z)} \tilde{w}(z) f(z,\bar{z}) \bar{z}^{Ns} z^{Nt}$.

As we vary $(s,t) \in \ZZ_{\geq 0} \times \ZZ_{\geq 0}$ we obtain a partition of $\ZZ_{>0} \times \ZZ_{>0}$ by squares $\Mon(Q(s,t))$.  Only the bottom right monomial of each tile is $N$-divisible.
If $s=t$, then the bottom right element of $\Mon(Q(s,t))$ is both $N$-divisible and diagonal.
If $s\neq t$, then no elements of $\Mon (Q(s,t))$ are both $N$-divisible and diagonal.
From these observations the result follows.

\end{proof}


\section{Proof of Main Results}

  We are now in position to utilize all the tools developed so far and show that the matrix positivity conditions imply positivity of the associated block Toeplitz matrix.
  We then prove the theorem characterizing Hermitian sums of squares modulo $(z^N \bar{z}^N-1)$.

\begin{prop}\label{sec:repr-matr-prod:gram-to-toep}
 Let $\omega = e^{2 \pi i / N}$.
 
 Let $f \in \CC_h[z,\bar{z}]$ be trigonometric mod $(z^N \bar{z}^N - 1)$ with data $(A_0,\dots,A_m)$.
 
 Suppose $\text{Gram}(f)[e^{i\theta},\omega e^{i\theta},\dots,\omega^{N-1}e^{i\theta}] \geq 0$ for all $\theta \in [0,2\pi]$.
 
 Then $\text{Toep}(A_0,\dots,A_m) \geq 0$.
\end{prop}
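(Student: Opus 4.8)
The plan is to test $\Toep(A_0,\dots,A_m)$ against an arbitrary block vector $W$ and to recognize the resulting number $W^\ast\Toep(A_0,\dots,A_m)W$ as the value of $\cF_N$ on a product of the form $|H(z)|^2 f(z,\bar z)$, where $H$ is a single holomorphic polynomial built from $W$; the conclusion then follows from the functional positivity of Proposition~\ref{sec:defin-basic-prop:functional_positivity}. First I would record the shape of $f$. For $A=[a_{jk}]_{j,k=0}^{N-1}\in\CC^{N\times N}$ put $f_A(z,\bar z) := \psi(z)^\ast A\psi(z) = \sum_{j,k=0}^{N-1} a_{jk}\bar z^j z^k$, matching the polynomial in the previous proposition. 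Since the block matrix defining a polynomial trigonometric mod $(z^N\bar z^N-1)$ has only its first block-row and block-column nonzero, expanding the block quadratic form gives
\[
 f(z,\bar z) \;=\; \sum_{t=0}^m z^{Nt}\, f_{A_t}(z,\bar z) \;+\; \sum_{s=1}^m \bar z^{Ns}\, f_{A_{-s}}(z,\bar z), \qquad A_{-s}=A_s^\ast .
\]

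Now fix $W=\begin{bmatrix} w^{(0)} & \cdots & w^{(m)}\end{bmatrix}^T$ with blocks $w^{(a)}=\begin{bmatrix} w^{(a)}_0 & \cdots & w^{(a)}_{N-1}\end{bmatrix}^T\in\CC^N$; we must show $W^\ast\Toep(A_0,\dots,A_m)W\ge 0$. Following the construction of the previous proposition, set $\tilde{w}^{(a)}(z):=\sum_{j=0}^{N-1} w^{(a)}_j z^{N-j}$ and assemble the single holomorphic polynomial
\[
 H(z) \;:=\; \sum_{a=0}^m \tilde{w}^{(a)}(z)\, z^{N(m-a)} \;\in\; \CC[z],
\]
of degree at most $N(m+1)$. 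Expanding $|H(z)|^2=\overline{H(z)}\,H(z)$, multiplying by $f(z,\bar z)$ in the decomposed form, and applying $\cF_N$ with its $\CC$-linearity together with the identity $\cF_N(\bar z^{Ns}z^{Nt}\,\overline{\tilde{v}(z)}\,\tilde{w}(z)\,f_A(z,\bar z))=\delta_{st}\,v^\ast A w$ from the previous proposition, every term dies except those for which the total power of $\bar z^N$ matches the total power of $z^N$: in the first sum this forces $m-a=(m-b)+t$, i.e.\ $t=b-a\ge 0$, and in the second it forces $(m-a)+s=m-b$, i.e.\ $s=a-b\ge 1$. Collecting the survivors (and using $A_{-(a-b)}=A_{b-a}$) yields
\[
 \cF_N\big(\,|H(z)|^2 f(z,\bar z)\,\big) \;=\; \sum_{a,b=0}^m (w^{(a)})^\ast A_{b-a}\, w^{(b)} \;=\; W^\ast\,\Toep(A_0,\dots,A_m)\,W,
\]
since $A_{b-a}$ is exactly the $(a,b)$ block of $\Toep(A_0,\dots,A_m)$. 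Finally, $H$ is holomorphic and the hypothesis is precisely that $\Gram(f)[e^{i\theta},\omega e^{i\theta},\dots,\omega^{N-1}e^{i\theta}]\ge 0$ for all $\theta$, so Proposition~\ref{sec:defin-basic-prop:functional_positivity} gives $\cF_N(|H(z)|^2 f(z,\bar z))\ge 0$. As $W$ was arbitrary, $\Toep(A_0,\dots,A_m)\ge 0$.

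The only real difficulty I anticipate is the index bookkeeping: one must choose the shift exponents inside $H$ (here $z^{N(m-a)}$) so that the Kronecker deltas produced by the representation lemma carve out exactly the $(a,b)$ block $A_{b-a}$ of $\Toep(A_0,\dots,A_m)$ rather than its reversal, and check that the monomial supports of all the products stay within the range where that lemma applies. A minor alternative is to use the unshifted $H(z)=\sum_{a=0}^m \tilde{w}^{(a)}(z)z^{Na}$ and observe afterwards that the block matrix $[A_{a-b}]_{a,b=0}^m$ obtained is conjugate to $\Toep(A_0,\dots,A_m)$ by the block permutation $a\mapsto m-a$, hence has the same positivity.
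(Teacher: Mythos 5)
Your proposal is correct and follows essentially the same route as the paper: test $\Toep(A_0,\dots,A_m)$ against an arbitrary block vector, encode the blocks in a single holomorphic polynomial via the shifted $\tilde{w}^{(a)}(z)z^{N\cdot(\cdot)}$ construction, evaluate $\cF_N(|H|^2 f)$ with the representation identity, and conclude by the functional positivity proposition. The only (harmless) difference is your reversed shift $z^{N(m-a)}$, which produces $W^\ast\Toep\,W$ directly, whereas the paper uses $z^{Nj}$ and obtains the block-reversed/transposed Toeplitz form, disposing of that discrepancy with a final positivity-of-the-transpose remark.
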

\begin{proof}

Let $v^{(0)},\dots,v^{(m)} \in \CC^N$ be arbitrary.

Write $v^{(j)} = \begin{bmatrix} v_0^{(j)} & \cdots & v_{N-1}^{(j)} \end{bmatrix}^T$ with $v_k^{(j)} \in \CC$.

Let $v = \begin{bmatrix} v^{(0)} & \cdots & v^{(m)} \end{bmatrix}^T \in (\CC^N)^{m+1}$.

It suffices to show $v^\ast \text{Toep}(A_0,\dots,A_m) v \geq 0$.

Observe that
\[
 v^\ast \text{Toep}(A_0,\dots,A_m) v = \sum_{j,k=0}^m v^{(j)\ast} A_{k-j} v^{(k)}
\]

For $j=0,\dots,m$ define $\tilde{v}^{(j)}(z) = \sum_{k=0}^{N-1} v_k^{(j)} z^{N-k} \in \CC[z]$.

Define $v(z) = \sum_{j=0}^m z^{Nj} \tilde{v}^{(j)}(z) \in \CC[z]$.

Let $\psi(z) = \phi_N(z) = \begin{bmatrix} 1 & z & \cdots & z^{N-1} \end{bmatrix}^T$.

Claim: $\mathcal{F}_N( |v(z)|^2 f(z,\bar{z}) ) = v^\ast \text{Toep}(A_0,\dots,A_m) v \geq 0$.

Introduce double-index notation for matrix coefficients of $f$ in order to simplify computations:
 \[
  f(z,\bar{z}) = 
  \begin{bmatrix} \psi(z) \\ z^N \psi(z) \\ \vdots \\ z^{Nm}\psi(z) \end{bmatrix}^\ast
  \begin{bmatrix}
    A_{00} & A_{01} & \cdots & A_{0m} \\
    A_{10} & A_{11} & \cdots & A_{1m} \\
    \vdots & \vdots & \ddots & \vdots \\
    A_{m0} & A_{m1} & \cdots & A_{mm}
  \end{bmatrix}
  \begin{bmatrix} \psi(z) \\ z^N \psi(z) \\ \vdots \\ z^{Nm}\psi(z) \end{bmatrix}
 \]
where $A_{jk} = 0$ if $\min(k,j) \neq 0$ and $A_{jk} = A_{k-j}$ otherwise.

Then:
\[
 f(z,\bar{z}) = \sum_{j,k=0}^m \bar{z}^{jN} z^{kN} \psi(z)^\ast A_{jk} \psi(z)
\]

We have:
\[
 |v(z)|^2 = \sum_{j,k=0}^m \bar{z}^{Nj} z^{Nk} \overline{ \tilde{v}^{(j)}(z) } \tilde{v}^{(k)}(z)
\]
Hence:
\[
 |v(z)|^2 f(z,\bar{z})
 =
 \sum_{j,k,c,d=0}^m \bar{z}^{N(c+j)} z^{N(d+k)} \overline{ \tilde{v}^{(j)}(z) } \tilde{v}^{(k)}(z) \psi(z)^\ast A_{cd} \psi(z)
\]
Then using Proposition \ref{sec:defin-basic-prop-1:computation_of_F} and passing between the double and single index coefficients for $f$:
\begin{align}
\mathcal{F}_N( |v(z)|^2 f(z,\bar{z}) ) &= \sum_{j,k,c,d=0}^m \mathcal{F}_N ( \bar{z}^{N(c+j)} z^{N(d+k)} \overline{ \tilde{v}^{(j)}(z) } \tilde{v}^{(k)}(z) \psi(z)^\ast A_{cd} \psi(z) ) \\ 
&= \sum_{j,k,c,d=0}^m \delta_{c+j,d+k} v^{(j)\ast} A_{cd} v^{(k)} \\
&= \sum_{\substack{j,k,c,d=0 \\ c+j=d+k}}^m v^{(j)\ast} A_{cd} v^{(k)} \\
&= \sum_{\substack{j,k,c,d=0 \\ j-k=d-c}}^m v^{(j)\ast} A_{d-c} v^{(k)} \\
&= \sum_{j,k=0}^m v^{(j)\ast} A_{j-k} v^{(k)} \\
&= v^\ast \text{Toep}(A_0,\dots,A_m)^T v 
\end{align}

By Proposition \ref{sec:defin-basic-prop:functional_positivity}  we have
\[
 v^\ast \text{Toep}(A_0,\dots,A_m)^T v \geq 0 \qquad \forall v \in (\CC^N)^{m+1}
\]
Therefore $\text{Toep}(A_0,\dots,A_m)^T \geq 0$, and hence $\text{Toep}(A_0,\dots,A_m) \geq 0$.

\end{proof}

\begin{thm}\label{MAIN-THEOREm}
 Let $\omega = e^{2 \pi i / N}$.
 
 Let $f \in \CC_h[z,\bar{z}]$ be trigonometric mod $(z^N \bar{z}^N - 1)$ with data $(A_0,\dots,A_m)$.
 
 The following conditions are equivalent:
 \begin{enumerate}
  \item $f \in \Sigma^2_h + (z^N \bar{z}^N - 1)$
  \item $\text{Gram}(f)[e^{i\theta},\omega e^{i\theta},\dots,\omega^{N-1}e^{i\theta}] \geq 0$ for all $\theta \in [0,2\pi]$
  \item $\text{Toep}(A_0,\dots,A_m) \geq 0$
 \end{enumerate}
\end{thm}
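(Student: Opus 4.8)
I would prove the theorem as a cycle of implications $(1)\Rightarrow(2)\Rightarrow(3)\Rightarrow(1)$, since at this point every genuinely substantive ingredient is already available: the Matrix Positivity Conditions (Lemma~\ref{MATRIX-POSITIVITY-CONDITIONS}), the $\cF_N$-based passage from Gram positivity to block Toeplitz positivity (Proposition~\ref{sec:repr-matr-prod:gram-to-toep}), the operator-valued Riesz--Fej\'er theorem and its block-trace corollary (Corollary~\ref{sec:operator-valued-rf:OPERATOR-RF}), and the block trace parametrization of Section~\ref{sec:block-trace-param}. The theorem is essentially the assembly of these pieces into a single statement, so I do not expect a real obstacle; the work has been front-loaded into the preliminaries and Section~3.

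\textbf{$(1)\Rightarrow(2)$.} Here I would simply invoke Lemma~\ref{MATRIX-POSITIVITY-CONDITIONS} with $I=(z^N\bar z^N-1)$ and, for each fixed $\theta$, with the $N$ points $p_j=\omega^j e^{i\theta}$, $j=0,\dots,N-1$. The only hypothesis to verify is \eqref{eq:17}: since $I$ is generated by $P(z,\bar z)=z^N\bar z^N-1$ and any $g\in I$ is of the form $g=qP$, it suffices to note that $(\omega^j e^{i\theta})^N\,\overline{(\omega^k e^{i\theta})}^{\,N}=\omega^{jN}\omega^{-kN}=1$ because $\omega^N=1$ and $|e^{i\theta}|=1$; this is exactly \eqref{eq:22}, so $P$ (hence every element of $I$) vanishes at $(p_j,\bar p_k)$ for all $j,k$. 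The lemma then yields \eqref{eq:23}, which is condition~(2).

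\textbf{$(2)\Rightarrow(3)$.} This implication is precisely Proposition~\ref{sec:repr-matr-prod:gram-to-toep}, where the functional $\cF_N$ does the work: one encodes an arbitrary block vector $v=(v^{(0)},\dots,v^{(m)})$ as the holomorphic polynomial $v(z)=\sum_j z^{Nj}\tilde v^{(j)}(z)$, uses Proposition~\ref{sec:defin-basic-prop:functional_positivity} to get $\cF_N(|v(z)|^2 f)\ge 0$, and unwinds the monomial bookkeeping to identify $\cF_N(|v(z)|^2 f)$ with $v^\ast\Toep(A_0,\dots,A_m)^T v$, giving $\Toep(A_0,\dots,A_m)\ge 0$.

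\textbf{$(3)\Rightarrow(1)$.} Finally, Corollary~\ref{sec:operator-valued-rf:OPERATOR-RF} turns positivity of $\Toep(A_0,\dots,A_m)$ into the existence of a positive block matrix $Q=[Q_{jk}]_{j,k=0}^{m}$, $Q_{jk}\in\CC^{N\times N}$, with $A_k=\Trace[T_{-k}Q]$; and by the block trace parametrization at the end of Section~\ref{sec:block-trace-param} the existence of such a $Q$ is equivalent to $f\in\Sigma^2_h+(z^N\bar z^N-1)$, which is~(1). This closes the cycle. The one place meriting a moment of care is degree bookkeeping --- the block count $m+1$ appearing in Corollary~\ref{sec:operator-valued-rf:OPERATOR-RF} must coincide with the trigonometric degree $m$ of $f$, which it does, so no truncation issue arises; the case $N=1$ then recovers the classical Riesz--Fej\'er lemma.
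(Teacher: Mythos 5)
Your proof is correct and follows exactly the same route as the paper: the cycle $(1)\Rightarrow(2)\Rightarrow(3)\Rightarrow(1)$ via Lemma~\ref{MATRIX-POSITIVITY-CONDITIONS}, Proposition~\ref{sec:repr-matr-prod:gram-to-toep}, and Corollary~\ref{sec:operator-valued-rf:OPERATOR-RF} together with the block trace parametrization. The paper's proof is terser (it cites the three results in one line each), while you spell out the verification of \eqref{eq:17} and the degree bookkeeping, but the substance is identical.
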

\begin{proof}
  
( (i) $\implies$ (ii) ) By Lemma \ref{MATRIX-POSITIVITY-CONDITIONS}.

( (ii) $\implies$ (iii) ) By Proposition \ref{sec:repr-matr-prod:gram-to-toep}.

( (iii) $\implies$ (i) ) By Corollary \ref{sec:operator-valued-rf:OPERATOR-RF} and the conclusion of Section \ref{sec:block-trace-param}.

\end{proof}


\bibliographystyle{plain}
\bibliography{references}

\end{document}